

\documentclass[12pt,draft,reqno]{amsart}


\usepackage{amssymb} 
\usepackage{dsfont} 
\usepackage{mathrsfs} 
\usepackage{stmaryrd} 
\usepackage{color}
\usepackage{xcolor}


\DeclareMathAlphabet{\mathpzc}{OT1}{pzc}{m}{it} 


\textheight=22.7cm 
\textwidth=16cm 

\oddsidemargin=0cm
\evensidemargin=0cm

\hoffset=0.3cm 
\voffset=-1.5cm 

\linespread{1} 


\newtheorem{Thm}{Theorem}[section]
\newtheorem{Cor}{Corollary}[section]
\newtheorem{Lem}{Lemma}[section]
\newtheorem{Prop}{Proposition}[section]

\newtheorem{Def}{Definition}[section]

\theoremstyle{definition}
\newtheorem{Rem}{Remark}[section]

\theoremstyle{definition}


\makeatletter
\@addtoreset{equation}{section}
\makeatother


\newcommand\function{\longrightarrow} 
\newcommand\setmeno{\!\smallsetminus\!} 
\newcommand\en{\mathbb{N}} 
\newcommand\ar{\mathbb{R}} 
\renewcommand\H{\mathcal{H}} 
\newcommand\duality[2]{\langle #1,#2 \rangle} 
\newcommand\lduality[2]{\left\langle #1,#2 \right\rangle} 
\providecommand{\clint}[1]{\hspace{0.045ex}\left[#1\right]} 
\providecommand{\opint}[1]{\hspace{0.15ex}\left]#1\right[\hspace{0.15ex}} 
\providecommand{\clsxint}[1]{\hspace{0.1ex}\left[#1\right[\hspace{0.15ex}} 
\providecommand{\cldxint}[1]{\hspace{0.15ex}\left]#1\right]} 
\newcommand\norm[2]{\Vert #1\Vert_{#2}} 
\newcommand\lnorm[2]{\left\Vert #1\right\Vert_{#2}} 
\renewcommand\d{\textsl{d}} 
\newcommand\K{\mathcal{K}} 
\DeclareMathOperator{\Proj}{Proj} 
\newcommand\Z{\mathcal{Z}} 
\newcommand\void{\varnothing} 
\newcommand{\Conv}{\mathscr{C}} 
\renewcommand{\P}{\mathrm{P}} 
\newcommand\A{\mathcal{A}} 
\newcommand\B{\mathcal{B}} 
\newcommand\e{\textsl{e}} 
\newcommand\parti{\mathscr{P}}
\DeclareMathOperator{\ret}{R} 
\newcommand{\BR}{{\textsl{BR}\hspace{0.17ex}}} 
\newcommand\loc{\mathrm{loc}} 

\newcommand{\Sob}{{\textsl{W}\hspace{0.17ex}}} 

%



\DeclareMathOperator{\V}{V} 


\newcommand\indicator{\mathds{1}} 

\newcommand{\eps}{\varepsilon} 


\newcommand{\borel}{\mathscr{B}} 
\renewcommand{\L}{{\textsl{L}\hspace{0.17ex}}} 
\newcommand\leb{\mathpzc{L}} 
\DeclareMathOperator{\de}{d \! \hspace{0.2ex}} 
\newcommand{\Step}{{\textsl{St}\hspace{0.17ex}}} 

\DeclareMathOperator{\Int}{int} 

\DeclareMathOperator{\cont}{Cont}  
\DeclareMathOperator{\discont}{Discont}  
\newcommand{\Czero}{{\textsl{C}\hspace{0.18ex}}} 
\DeclareMathOperator{\Lipcost}{Lip} 
\newcommand{\Lip}{{\textsl{Lip}\hspace{0.15ex}}} 



\newcommand\C{\mathcal{C}} 
\newcommand\G{\mathcal{G}} 

\newcommand{\BV}{{\textsl{BV}\hspace{0.17ex}}} 

\DeclareMathOperator{\pV}{V} 
\renewcommand\r{\textsl{r}} 
\newcommand\vartot[1]{\!\left\bracevert\! #1 \!\right\bracevert\!} 
\DeclareMathOperator{\D}{D\!} 

\newcommand{\Ctilde}{\widetilde{\mathcal{C}}} 

\newcommand{\yhat}{\widehat{y}\,} 

\newcommand{\Sw}{{\mathsf{S}}} 

\renewcommand\sp{\hspace{3.1ex}} 


\definecolor{blu}{rgb}{0.1,0.1,1}

\definecolor{green}{rgb}{0.0, 0.5, 0.0}

\definecolor{marr}{rgb}{0.63, 0.47, 0.35}


\begin{document}


\title[Sweeping processes]{Prox-regular sweeping processes \\ with bounded retraction}

\author{Vincenzo Recupero}
\thanks{The author is a member of GNAMPA-INdAM}


\address{\textbf{Vincenzo Recupero} \\
        Dipartimento di Scienze Matematiche \\ 
        Politecnico di Torino \\
        C.so Duca degli Abruzzi 24 \\ 
        I-10129 Torino \\ 
        Italy. \newline
        {\rm E-mail address:}
        {\tt vincenzo.recupero@polito.it}}

\subjclass[2020]{34G25, 34A60, 47J20, 74C05}
\keywords{Evolution variational inequalities, Functions of bounded variation, Sweeping processes, Prox-regular sets, Retraction, Geodesics with respect to the excess}



\begin{abstract}
The aim of this paper is twofold. On one hand we prove that the Moreau's sweeping processes driven by a uniformly prox-regular moving set with local bounded retraction have a unique solution provided that the coefficient of prox-regularity is larger than the size of any jump of the driving set. On the other hand we show how the case of local bounded retraction can be easily reduced to the $1$-Lipschitz continuous case: indeed we first solve the Lipschitz continuous case by means of the so called ``catching-up algorithm", and we reduce the local bounded retraction case to the Lipschitz one by using a reparametrization technique for functions with values in the family of prox-regular sets. 
\end{abstract}


\maketitle


\thispagestyle{empty}


\section{Introduction}

A \emph{sweeping process} is an evolution problem with unilateral constraints which was originally introduced by J.J. Moreau in \cite{Mor71} and that can be described as follows. Let $\H$ be a real Hilbert space, and for every 
$t \in \clint{0,T}$, $T > 0$, let $\C(t)$ be a given nonempty, closed subset of $\H$, so that $\C(t)$ can be considered as a moving set whose shape can vary during the time evolution. One then has to find a function 
$y : \clint{0,T} \function \H$ 
such that
\begin{alignat}{3}
  & y(t) \in \C(t) & \quad & \forall t \in \clint{0,T}, \label{y in C - Lip - intro} \\
  & y'(t) \in -N_{\C(t)}(y(t)) & \quad & \text{for $\leb^{1}$-a.e. $t \in \clint{0,T}$}, \label{diff. incl. - Lip - intro} \\ 
  & y(0) = y_{0}, &  \label{in. cond. - Lip - intro}  
\end{alignat}
$y_0$ being a prescribed point in $\C(0)$. Here $\leb^1$ is the Lebesgue measure, and $N_{\C(t)}(y(t))$ is the (proximal) exterior normal cone to $\C(t)$ at $y(t)$. In the original setting of Moreau's paper \cite{Mor71}, the sets 
$\C(t)$ are assumed to be convex, and the mapping $t \longmapsto \C(t)$ is supposed to be Lipschitz continuous in time, when the family of closed subsets of $\H$ is endowed with the Hausdorff metric. There, the sweeping process \eqref{y in C - Lip - intro}--\eqref{in. cond. - Lip - intro} is solved by approximating the problem by what is now called the Moreau-Yosida regularization. He also solves \eqref{y in C - Lip - intro}--\eqref{in. cond. - Lip - intro} with a more general absolutely continuous driving set $\C(t)$ via an arc-length reparametrization of $\C(t)$: indeed if $\phi(t)$ is the variation of $\C$ on $\clint{0,t}$, then one finds a Lipschitz moving set $\Ctilde$ such that 
\[
  \C(t)= \Ctilde(\phi(t))
\] 
and the problem can be easily reduced to the Lipschitz continuous case by showing that if $\yhat(\tau)$ is the Lipschitz continuous solution of the sweeping processes driven by $\Ctilde(\tau)$, then $y(t) = \yhat(\phi(t))$ is the solution of the process driven by $\C(t)$.

This analysis of sweeping processes in the convex setting continued in  \cite{Mor72, Mor73, Mor74} and culminated in \cite{Mor77} where right-continuous driving sets $\C(t)$ with bounded variation are considered, and in this case problem \eqref{y in C - Lip - intro}--\eqref{in. cond. - Lip - intro} has to be reformulated by looking for a right-continuous solution $y$ with bounded variation such that the distributional derivative of $y$ can be written as 
$\D y = v \mu$ for a suitable positive measure $\mu$ in $\clint{0,T}$ and a density function $v \in \L^1(\mu;\H)$ for which \eqref{y in C - Lip - intro} and \eqref{in. cond. - Lip - intro} hold together with the inclusion
\begin{equation}\label{diff.incl.-BV-intro}
  v(t) \in -N_{\C(t)}(y(t)) \qquad \text{for $\mu$-a.e. $t \in \clint{0,T}$},
\end{equation}
which is the $\BV$ counterpart of \eqref{diff. incl. - Lip - intro}. Problem \eqref{diff.incl.-BV-intro} was solved in \cite{Mor77} by means of an implicit time discretization scheme which is usually called 
\emph{catching-up algorithm}. Actually in \cite{Mor77} the moving set $\C$ is assumed to be only with bounded right continuous retraction, rather than with bounded variation, i.e. the Hausdorff distance is replaced by the asymmetric distance $\e(\A,\B) := \sup_{x \in \A} \d(x,\B)$, called \emph{excess of $\A$ over $\B$}, with $\A$, $\B$ nonempty closed convex sets in $\H$, and it is assumed that $\sup \sum \e(\C(t_{j-1}), \C(t_j)) < \infty$, where the supremum is taken over all subdivisions of $0 = t_0 < \cdots < t_m = T$ (all the precise definitions will be provided in the next section).

Let us mention that the $\BV$ problem can also be solved by an arc-length reparametrization technique as done in \cite{Rec08, Rec11b} for the continuous case and in \cite{Rec09b, Rec16a, Rec20} for the $\BV$ discontinuous case. In this last case the arc-length reparametrization must be completed on the jump intervals by means of a suitable geodesic in the space of closed convex sets endowed with the Hausdorff metric. Convex sweeping processes in the $BV$ framework have received a great deal of attention and several results can be found, e.g., in the monograph \cite{Mon93}, in papers 
\cite{Cas73, Cas76, Cas83, CasDucVal93, BroKreSch04, BroThi10, KreRoc11, AdlHadThi14, Rec15a, Thi16, DimMauSan16, KopRec16, RecSan18}, and in the references therein.

It is very natural to try to relax the convexity assumption on the driving set $\C(t)$, and the first papers where the non-convex case is addressed are \cite{Val88, CasMon96, ColGon99, Ben00}. In the first years of this century, the analysis of non-convex sweeping processes started concentrating on the notion of \emph{uniformly prox-regular} sets, which can be described as those closed subsets $\K$ for which there exists $r > 0$ such that on the neighborhood $U_r(\K) := \{x \in \H\ :\ d(x, \K) < r\}$ the projection on $\K$ is a singleton and is continuous,
so that such sets can be regarded as a natural generalization of convex sets. Uniformly prox-regular sets were introduced by Federer in \cite{Fed59} (as \emph{positively reached sets}) in the finite dimensional case. Further properties of these sets were investigated by Vial in \cite{Via83} (where the term \emph{weak convex set} is used). The notion of prox-regularity was later extended to infinite dimensional spaces by Clarke, Stern and Wolenski in \cite{ClaSteWol95}, and finally the local properties where investigated by Poliquin, Rockafellar and Thibault in \cite{PolRocThi00}.

Existence and uniqueness results for sweeping processes in the prox-regular setting were achieved by Colombo and Monteiro Marques in \cite{ColMon03} and by Thibault in \cite{Thi03}, culminating in the paper \cite{EdmThi06} by Edmond and Thibault. Since then, prox-regular sweeping processes has called the attention of many other authors: we mention, for instance, 
\cite{BouThi05, CheMon07, BerVen12, SenThi14, BerVen15, AdlNacThi17, KMR21, KMR22, KMR23} and the general result \cite{NacThi}. 

In the above mentioned literature on non-convex sweeping processes, the continuity assumptions on the moving driving set are always related to the Hausdorff distance and not to the retraction. To the best of our knowledge, only in \cite{Thi16} it is proved the uniqueness of solutions to prox-regular sweeping processes with bounded retraction, but the existence is proved under the assumption that $\C(t)$ is convex and with bounded retraction (and more generally with bounded truncated retraction).

In the present paper we address the case of prox-regular sweeping processes when the driving moving set has locally bounded retraction. We first solve the Lipschitz continuous case by means of the so called ``catching-up algorithm", and we reduce the general case of locally bounded retraction to the Lipschitz one by using the reparametrization technique of \cite{Rec20} adapted to functions with values in the family of prox-regular sets.

The plan of the paper is the following. In Section \ref{S:prelim} we present the preliminaries needed to state the problems and the theorems. In Section \ref{S:Lip case} we solve the prox-regular sweeping processes with local bounded retraction when $\C(t)$ is $1$-Lipschitz continuous with respect to the excess. Finally in Section \ref{proofs} we solve the general prox-regular processes with locally bounded retraction by reducing them to the Lipschitz case thanks to the reparametrizion method cited above.


\section{Preliminaries and notations}\label{S:prelim}

In this section we recall the main definitions and tools needed in the paper. The set of integers greater than or equal to $1$ will be denoted by $\en$. 

\subsection{Prox-regular sets}

We recall here some notions of non-smooth analysis. We refer the reader to the monographs 
\cite{RocWet98, ClaLedSteWol98, Thi23a, Thi23b}. Throughout this paper we assume that
\begin{equation}\label{H-prel}
\begin{cases}
  \text{$\H$ is a real Hilbert space with inner product $(x,y) \longmapsto \duality{x}{y}$}, \\
  \norm{x}{} := \duality{x}{x}^{1/2}, \quad x \in \H,
\end{cases}
\end{equation}
and we endow $\H$ with the natural metric defined by $(x,y) \longmapsto \norm{x-y}{}$, $x, y \in \H$.
If $\rho > 0$ and $x \in \H$ we set $B_\rho(x) := \{y \in \H\ :\ \norm{y-x}{} < \rho\}$ and 
$\overline{B}_\rho(x) :=  \{y \in \H\ :\ \norm{y-x}{} \le \rho\}$. If $\Z \subseteq \H$, the closure and the boundary of $\Z$ will be respectively denoted by $\overline{\Z}$ and $\partial \Z$. If $x \in \H$ we also set 
$\d_\Z(x) := \d(x,\Z) := \inf_{s \in \Z} \norm{x-s}{}$, defining in this way a function $\d_\Z : \H \function \mathbb{R}$.
Let us recall that if $U$ is an open subset of $\H$ and if $f : U \function \mathbb{R}$, then we say that 
$f$ is \emph{(Frech\'et) differentiable at $x \in U$} if there exists a (unique) $\nabla f(x) \in \H$ such that 
$\lim_{h \to 0} (f(x+h) - f(x) - \duality{\nabla f(x)}{h})/\norm{h}{} = 0$.

\begin{Def}
If $\Z \subseteq \H$, $\Z \neq \void$, and $y \in \H$ then we set
\begin{equation}
 \Proj_\Z(y) := \left\{x \in \Z\ :\ \norm{x-y}{} = \inf_{z \in \Z} \norm{z-y}{}\right\}.
\end{equation}
\end{Def}

Statement (2.3) below can be found, e.g., in \cite[Proposition 1.3]{ClaLedSteWol98}. In fact, given 
$y \in \mathcal{H}$ and $x \in \Z$, we see that $\|x-y\|^2 \leq \|z-y\|^2$ (resp. $\|x-y\|^2 < \|z-y\|^2$) for all $z \in \Z \setmeno \{x\}$ if and only if 
$x \in \Proj_\Z(y)$ (resp. $\{x\} = \Proj_\Z(y)$), and writing
\[
  \|z-y\|^2 = \|z-x\|^2 - 2\langle y-x,z-x\rangle + \|x-y\|^2
\]
we then have the following

\begin{Prop}
If $\Z \subseteq \H$, $\Z \neq \void$, and $y \in \H$, then 
\begin{equation}
 x \in \Proj_\Z(y) \ \Longleftrightarrow \ 
 \left[\ x \in \Z,\quad  \duality{y-x}{z-x} \le \frac{1}{2}\norm{z-x}{}^2 \ \ \forall z \in \Z\ \right] \label{ch-pr} 
\end{equation}
and
\begin{equation}
 \{x\} = \Proj_\Z(y) \quad \Longleftrightarrow \ 
 \left[\ x \in \Z,\quad \duality{y-x}{z-x} < \frac{1}{2}\norm{z-x}{}^2 \ \ \forall z \in \Z \setmeno \{x\}\ \right] \label{ch-pr-str}
\end{equation}
\end{Prop}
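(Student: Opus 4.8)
The plan is to reduce both equivalences to the elementary algebraic identity already displayed just before the statement, namely
\[
  \|z-y\|^2 = \|z-x\|^2 - 2\duality{y-x}{z-x} + \|x-y\|^2,
\]
which holds for all $x,y,z \in \H$ simply by expanding $\|z-y\|^2 = \|(z-x)-(y-x)\|^2$. The whole proof is a direct computation, so I would not invoke the cited reference but argue from scratch.

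First, for the non-strict equivalence \eqref{ch-pr}, I would start from the very definition of $\Proj_\Z(y)$: a point $x \in \Z$ belongs to $\Proj_\Z(y)$ precisely when $\norm{x-y}{} = \inf_{z\in\Z}\norm{z-y}{}$, and since the norm is nonnegative this is equivalent to $\norm{x-y}{}^2 \le \norm{z-y}{}^2$ for every $z \in \Z$. Substituting the identity above and cancelling the common summand $\norm{x-y}{}^2$, this inequality becomes $0 \le \norm{z-x}{}^2 - 2\duality{y-x}{z-x}$, i.e.\ $\duality{y-x}{z-x} \le \tfrac12\norm{z-x}{}^2$. Because the value $z = x$ yields the trivial relation $0 \le 0$, the quantifiers over $z \in \Z$ and over $z \in \Z \setmeno \{x\}$ are interchangeable here, which gives exactly \eqref{ch-pr}.

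For the strict equivalence \eqref{ch-pr-str} I would run the same computation with strict inequalities: $\{x\} = \Proj_\Z(y)$ means that $x \in \Z$ is the \emph{unique} minimizer, i.e.\ $\norm{x-y}{}^2 < \norm{z-y}{}^2$ for every $z \in \Z \setmeno \{x\}$, and the identical substitution and cancellation transform this into $\duality{y-x}{z-x} < \tfrac12\norm{z-x}{}^2$ for all $z \in \Z \setmeno \{x\}$. There is no genuine obstacle in this argument; the only point that deserves care is the treatment of the index $z = x$, which is harmless in the non-strict case (so can be included freely) but must be excluded in the strict case, since at $z = x$ the strict inequality would fail.
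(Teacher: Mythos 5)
Your proof is correct and follows essentially the same route as the paper, which also deduces both equivalences from the definition of $\Proj_\Z(y)$ as a (unique) minimizer together with the identity $\|z-y\|^2 = \|z-x\|^2 - 2\duality{y-x}{z-x} + \|x-y\|^2$. Your explicit remark about the harmless index $z = x$ in the non-strict case is a small point the paper leaves implicit, but it is the same argument.
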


If $\K$ is a nonempty closed subset of $\H$ and $x \in \K$, then $N_\K(x)$ denotes the \emph{(exterior proximal) normal cone of $\K$ at $x$} which is defined by setting 
\begin{equation}\label{normal cone}
  N_\K(x) := \{\lambda(y-x) \ :\ x \in \Proj_{\K}(y),\ y \in \H,\ \lambda \ge 0\}. 
\end{equation}

Let us also recall the following proposition (see, e.g., \cite[Proposition 1.5]{ClaLedSteWol98}).

\begin{Prop}\label{propsigma}
Assume that $\K$ is a closed subset of $\H$ and that $x \in \K$. We have that $u \in N_\Z(x)$ if and only if there exists $\sigma \ge 0$ such that 
\begin{equation}\label{prox normal ineq}
  \duality{u}{z-x} \le \sigma \norm{z-x}{}^2 \qquad \forall z \in \K.
\end{equation}
\end{Prop}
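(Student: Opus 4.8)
The plan is to deduce both implications directly from the projection characterization \eqref{ch-pr}, which is exactly tailored to match the proximal inequality \eqref{prox normal ineq}. Recall that by the definition \eqref{normal cone} one has $u \in N_\K(x)$ precisely when $u = \lambda(y-x)$ for some $y \in \H$ and some $\lambda \ge 0$ with $x \in \Proj_\K(y)$. Thus in each direction the task reduces to converting one scalar inequality into the other by a suitable choice of the multiplicative constant.

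For the forward implication I would start from such a representation $u = \lambda(y-x)$. Since $x \in \Proj_\K(y)$, the characterization \eqref{ch-pr} gives $\duality{y-x}{z-x} \le \frac{1}{2}\norm{z-x}{}^2$ for every $z \in \K$. Multiplying this inequality by $\lambda \ge 0$ yields $\duality{u}{z-x} \le \frac{\lambda}{2}\norm{z-x}{}^2$ for all $z \in \K$, so that \eqref{prox normal ineq} holds with the choice $\sigma := \lambda/2 \ge 0$.

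For the converse, suppose \eqref{prox normal ineq} holds for some $\sigma \ge 0$. Since $\norm{z-x}{}^2 \ge 0$, the same inequality remains valid when $\sigma$ is replaced by any larger constant, so I may assume without loss of generality that $\sigma > 0$. The idea is then to exhibit an explicit point $y$ having $x$ among its projections onto $\K$. I would set $y := x + u/(2\sigma)$, so that $y - x = u/(2\sigma)$ and hence $u = 2\sigma(y-x)$. For every $z \in \K$ one computes $\duality{y-x}{z-x} = \frac{1}{2\sigma}\duality{u}{z-x} \le \frac{1}{2\sigma}\cdot \sigma \norm{z-x}{}^2 = \frac{1}{2}\norm{z-x}{}^2$, which is precisely the right-hand condition in \eqref{ch-pr}; therefore $x \in \Proj_\K(y)$. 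Taking $\lambda := 2\sigma \ge 0$ we conclude $u = \lambda(y-x) \in N_\K(x)$.

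The argument is essentially computational, and I do not expect a genuine obstacle: the whole proof rests on matching the scaling factor $\lambda = 2\sigma$ that interconverts \eqref{prox normal ineq} and \eqref{ch-pr}. The only points requiring a little care are the degenerate situations. When $u = 0$ the membership $u \in N_\K(x)$ is immediate (take $\lambda = 0$ and any $y$), and the reduction to $\sigma > 0$ must be recorded explicitly so that the division by $2\sigma$ in the converse is legitimate.
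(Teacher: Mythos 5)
Your argument is correct: both directions follow from \eqref{ch-pr} exactly as you describe, the scaling $\lambda = 2\sigma$ (resp.\ $\sigma = \lambda/2$) interconverts the two inequalities, and you rightly record the reduction to $\sigma > 0$ and the trivial case $u = 0$. The paper gives no proof of its own, citing \cite[Proposition 1.5]{ClaLedSteWol98}, and the standard argument there is this same computation (one shows $x \in \Proj_\K(x + tu)$ with $t = 1/(2\sigma)$, which is precisely your choice of $y$), so your proof is essentially the intended one.
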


Now we recall the notion of \emph{uniformly prox-regular set}, introduced in 
\cite[Section 4, Theorem 4.1-(d)]{ClaSteWol95}) under the name of \emph{proximal smooth set}, and generalized in \cite[Definition 1.1, Definition 2.4, Theorem 4.1]{PolRocThi00}.

\begin{Def}
If $\K$ is a nonempty closed subset of $\H$ and if $r \in \opint{0,\infty}$, then we say that $\K$ is 
\emph{$r$-prox-regular} if for every $y \in \{v \in \H\ :\ 0 < \d_{\K}(v) < r\}$ we have that $\Proj_\K(y) \neq \void$ and
\[
  x \in \Proj_\K\left(x+r\frac{y-x}{\norm{y-x}{}}\right) \qquad \forall x \in \Proj_\K(y).
\]
The family of $r$-prox-regular subsets of $\H$ will be denoted by $\Conv_r(\H)$. We will also indicate by 
$\Conv_0(\H)$ the family of nonempty closed subsets of $\H$.
\end{Def}

It is useful to take into account the following easy property.

\begin{Prop}\label{1proj in seg}
Let $\K$ be a closed subset of $\H$. If $y \in \H \setmeno \K$ and $x \in \Proj_\K(y)$ then 
$\Proj_\K(x + t(y-x)) = \{x\}$ for every $t \in \opint{0,1}$. Hence if $\K$ is also $r$-prox-regular for some $r > 0$ it follows that $\Proj_\K(u)$ is a singleton for every $u \in \H$ such that $\d_{\K}(u) < r$.
\end{Prop}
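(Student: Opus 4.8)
The plan is to derive both assertions from the variational characterizations of the projection in \eqref{ch-pr} and \eqref{ch-pr-str}, the first statement being the heart of the matter and the second one following by a short geometric reduction; no compactness or prox-regularity is needed for the first part.

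For the first assertion, I would fix $y \in \H \setmeno \K$, $x \in \Proj_\K(y)$ and $t \in \opint{0,1}$, and set $u := x + t(y-x)$. Since $x \in \Proj_\K(y)$, the characterization \eqref{ch-pr} gives $\duality{y-x}{w-x} \le \frac{1}{2}\norm{w-x}{}^2$ for every $w \in \K$. Observing that $u - x = t(y-x)$, I would multiply this inequality by the factor $t > 0$ and then invoke $t < 1$ together with $\norm{w-x}{}^2 > 0$ for $w \neq x$, obtaining
\[
  \duality{u-x}{w-x} = t\,\duality{y-x}{w-x} \le \frac{t}{2}\norm{w-x}{}^2 < \frac{1}{2}\norm{w-x}{}^2
  \qquad \forall w \in \K \setmeno \{x\}.
\]
Since $x \in \K$, the strict characterization \eqref{ch-pr-str} then yields precisely $\{x\} = \Proj_\K(u) = \Proj_\K(x + t(y-x))$, which is the claim. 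The essential (and really only) point is that scaling the weak projection inequality by $t \in \opint{0,1}$ automatically upgrades it to the strict inequality that forces uniqueness.

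For the second assertion, assume now that $\K$ is $r$-prox-regular and let $u \in \H$ with $\d_\K(u) < r$. If $u \in \K$ then trivially $\Proj_\K(u) = \{u\}$, so I may assume $0 < \d_\K(u) < r$. By the definition of $r$-prox-regularity we have $\Proj_\K(u) \neq \void$; I would pick any $x \in \Proj_\K(u)$ and set $y := x + r(u-x)/\norm{u-x}{}$, so that $\norm{y-x}{} = r$. The same definition gives $x \in \Proj_\K(y)$, whence $\d_\K(y) = \norm{y-x}{} = r > 0$, so that in particular $y \in \H \setmeno \K$. Writing $t := \d_\K(u)/r = \norm{u-x}{}/r \in \opint{0,1}$, a direct computation gives $t(y-x) = u-x$, that is $u = x + t(y-x)$, so the first assertion applies and delivers $\Proj_\K(u) = \{x\}$. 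Since $x$ was an arbitrary element of $\Proj_\K(u)$, this shows that $\Proj_\K(u)$ is a singleton.

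I do not expect a genuine obstacle here: the only mild point to verify carefully is that the auxiliary point $y$ lies outside $\K$ so that the first assertion is applicable, which comes for free from $\d_\K(y) = r > 0$, and that $u$ sits on the segment joining $x$ to $y$ at the parameter $t = \d_\K(u)/r \in \opint{0,1}$, which is the routine computation indicated above.
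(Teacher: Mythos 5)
Your proof is correct and takes essentially the same route as the paper: you scale the inequality \eqref{ch-pr} by $t \in \opint{0,1}$ to upgrade it to the strict characterization \eqref{ch-pr-str}, and then apply the first assertion with $y = x + r(u-x)/\norm{u-x}{}$ furnished by the definition of $r$-prox-regularity. Your version is in fact slightly more careful: you take the correct parameter $t = \norm{u-x}{}/r \in \opint{0,1}$ (the paper's proof misprints this as $t_u = r/\norm{u-x}{}$), you explicitly dispose of the trivial case $u \in \K$, and you verify $y \notin \K$ via $\d_\K(y) = r > 0$, points the paper's argument leaves tacit.
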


\begin{proof}
If $t \in \opint{0,1}$ then, thanks to \eqref{ch-pr}, for every $z \in \K \setmeno \{x\}$ we have that 
\[
  \duality{x + t(y-x) - x}{z - x} = t\duality{y-x}{z-x} \le \frac{t}{2}\norm{z-x}{}^2 < \frac{1}{2}\norm{z-x}{}^2,
\]
therefore $\Proj_\K(x + t(y-x)) = \{x\}$ by virtue of \eqref{ch-pr-str} and the first statement is proved. If $\K$ is also $r$-prox-regular and $\d_{\K}(u) < r$, pick $x \in \Proj_\K(u)$ and let $y = x + r(u-x)/\norm{u-x}{}$. Then 
$x \in \Proj_{K}(y)$, $u = x + t_u(y-x)$ with $t_u = r/\norm{u-x}{} \in \opint{0,1}$, so that $\{x\} = \Proj_\K(u)$. 
\end{proof}

\begin{Def}
If $r \in \opint{0,\infty}$ and if $\K \subseteq \H$ is $r$-prox-regular, then we can define the function
$\P_\K : \{y \in \H\ :\ \d_\K(y) < r\} \function \K$ by setting $\P_\K(y) := x$ where $\{x\} = \Proj_\K(y)$ for every 
$y \in \H$ such that $\d_\K(y) < r$.
\end{Def}

The following characterizations of prox-regularity are very useful. The proofs can be found in 
\cite[Theorem 4.1]{PolRocThi00} and in \cite[Theorem 16]{ColThi10}.

\begin{Thm}\label{charact proxreg}
Let $\K$ be a nonempty closed subset of $\H$ and let $r \in \opint{0,\infty}$. The following 
statements are equivalent.
\begin{itemize}
\item[(i)] $\K$ is $r$-prox-regular.
\item[(ii)] $\d_\K$ is differentiable in $\{y \in \H\ :\ 0 < \d_\K(y) < r\}$.
\item[(iii)] For every $x \in \K$ and $n \in N_\K(x)$ we have 
\[
  \duality{n}{z-x} \le \frac{\norm{n}{}}{2r}\norm{z-x}{}^2 \qquad \forall z \in \K.
\]
\end{itemize}
\end{Thm}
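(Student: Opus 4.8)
The plan is to prove the three conditions equivalent along the cycle $\mathrm{(i)}\implica\mathrm{(iii)}\implica\mathrm{(ii)}\implica\mathrm{(i)}$, using \eqref{ch-pr}--\eqref{ch-pr-str} and Proposition \ref{1proj in seg} throughout; note that (iii) is precisely the uniform (in $\norm{n}{}$) version of Proposition \ref{propsigma}, with the constant $\sigma$ forced to be $\norm{n}{}/(2r)$. For $\mathrm{(i)}\implica\mathrm{(iii)}$, given $x\in\K$ and $n\in N_\K(x)$, I write $n=\lambda(y-x)$ with $\lambda\ge0$, $x\in\Proj_\K(y)$ via \eqref{normal cone}; discarding the trivial case $n=0$, I set $u:=x+r(y-x)/\norm{y-x}{}$. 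Either $0<\d_\K(y)<r$ and $r$-prox-regularity applies, or $\norm{y-x}{}\ge r$ and $u=x+t(y-x)$ with $t\in\cldxint{0,1}$, in which case Proposition \ref{1proj in seg} applies (the hypothesis $x\in\Proj_\K(y)$ covering $t=1$); either way $x\in\Proj_\K(u)$ with $\norm{u-x}{}=r$. Feeding $u-x=(r/\norm{n}{})\,n$ into \eqref{ch-pr} gives (iii) at once.

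For $\mathrm{(iii)}\implica\mathrm{(ii)}$, pairing (iii) at two projection points and adding yields the monotonicity estimate
\[
 (1-d_0/r)\,\norm{x_1-x_2}{}^2 \le \duality{y_1-y_2}{x_1-x_2}, \qquad \{x_i\}=\Proj_\K(y_i),\ \d_\K(y_i)\le d_0<r,
\]
which simultaneously gives single-valuedness of $\Proj_\K$ (take $y_1=y_2$) and $(1-d_0/r)^{-1}$-Lipschitz continuity of $\P_\K$ on $\{\d_\K\le d_0\}$. Granting that $\Proj_\K(y)\neq\void$ on the tube, I fix $y$ with $0<d:=\d_\K(y)<r$, $\{x\}=\Proj_\K(y)$, $p:=(y-x)/d$, and sandwich the increment of $\d_\K$: the bound $\d_\K(y+h)\le\norm{y+h-x}{}$ gives $\d_\K(y+h)-\d_\K(y)-\duality{p}{h}\le\norm{h}{}^2/(2d)$ from above, while $\d_\K(y+h)\ge\duality{p}{y+h-x_h}$ together with (iii) at $x$ (here $\{x_h\}=\Proj_\K(y+h)$) gives the lower bound $-\norm{x_h-x}{}^2/(2r)$. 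Since $\norm{x_h-x}{}\le L\norm{h}{}$ by the Lipschitz estimate, both sides are $O(\norm{h}{}^2)$, so $\d_\K$ is Fr\'echet differentiable at $y$ with $\nabla\d_\K(y)=p$, which is (ii).

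For $\mathrm{(ii)}\implica\mathrm{(i)}$, a standard argument with $\eps$-minimizers and the Fr\'echet differentiability shows that $\norm{\nabla\d_\K(y)}{}=1$ off $\K$ and that every minimizing sequence for $\d_\K(y)$ converges to $y-d\,\nabla\d_\K(y)$; hence $\Proj_\K(y)=\{y-d\,\nabla\d_\K(y)\}$ is a nonempty singleton on the tube. Writing $p:=\nabla\d_\K(y)$ and $\{x\}=\Proj_\K(y)$, the defining condition of $r$-prox-regularity reduces to showing $g(s):=\d_\K(x+sp)=s$ for every $s\in\opint{0,r}$ (note $g(s)\le s$ and $g(d)=d$), after which $x\in\Proj_\K(x+sp)$ up to $s\to r^-$ gives $\K\in\Conv_r(\H)$.

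The heart of the theorem --- and the step I expect to be hardest --- is this last global-along-the-ray propagation, together with the nonemptiness of $\Proj_\K$ on the tube that was granted in $\mathrm{(iii)}\implica\mathrm{(ii)}$. The difficulty is that pointwise differentiability only produces $g'(s)=\duality{\nabla\d_\K(x+sp)}{p}=1$ at those $s$ where $g(s)=s$ and $\nabla\d_\K(x+sp)=p$, which does not by itself push the identity $g(s)=s$ beyond such an $s$. I would resolve it by an eikonal/characteristics argument: since $\norm{\nabla\d_\K}{}\equiv1$ on the connected tube, the set $\{s\in\opint{0,r}:g(s)=s\}$ is nonempty (Proposition \ref{1proj in seg} supplies an initial subinterval) and closed, and I would prove it open by using differentiability on the \emph{whole} tube --- not at a single point --- to transport $\nabla\d_\K=p$, and hence the nearest point $x$, continuously along the ray; connectedness then forces $g\equiv\mathrm{id}$. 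The very same continuation, started from points arbitrarily close to $\K$, is what secures the nonemptiness of the projection throughout the tube. It is precisely here that the radius $r$ enters and that the argument fails for a general closed set.
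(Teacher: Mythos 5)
First, a point of reference: the paper does not prove Theorem~\ref{charact proxreg} at all --- it imports it, citing \cite[Theorem 4.1]{PolRocThi00} and \cite[Theorem 16]{ColThi10} --- so your attempt has to be measured against those standard proofs. Your implication (i)$\implica$(iii) is complete and correct (the dichotomy $0<\d_\K(y)<r$ versus $\norm{y-x}{}\ge r$, handled by the definition of prox-regularity and Proposition~\ref{1proj in seg}, then \eqref{ch-pr} applied at $u=x+r(y-x)/\norm{y-x}{}$, is exactly the standard route), and both the hypomonotonicity estimate and the two-sided sandwich in (iii)$\implica$(ii) are correct as far as they go. But the two steps you yourself flag as ``granted''/``hardest'' are genuine gaps, and the eikonal continuation you sketch closes neither. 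For the nonemptiness of $\Proj_\K$ on the tube under (iii): in that leg the differentiability of $\d_\K$ is the \emph{conclusion}, not a hypothesis, so there is no gradient field to transport, and ``starting from points arbitrarily close to $\K$'' does not help, since closedness alone guarantees nearest points nowhere in infinite dimensions. The cited proofs use a variational principle here: Ekeland's principle applied to $z\longmapsto\norm{y-z}{}$ on $\K$ produces approximate projections carrying proximal-normal information, and the inequality in (iii) then forces minimizing sequences to be Cauchy; alternatively, Ste\v{c}kin-type density of points admitting nearest points in Hilbert space, combined with your $(1-d_0/r)^{-1}$-Lipschitz estimate, lets one extend $\P_\K$ from a dense subset of the tube and verify the extension is a nearest point. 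Either way an additional tool is indispensable.

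The openness step in (ii)$\implica$(i) is a deeper problem. Your $\eps$-minimizer argument does show, from differentiability on the \emph{whole} tube, that every minimizing sequence converges, hence that $\P_\K$ is continuous there and $\nabla\d_\K(x+sp)\to p$ as $s\downarrow s^*$, where $g(s):=\d_\K(x+sp)$ and $s^*:=\sup\{s\ :\ g(s)=s\}$. But continuity of the gradient only yields $g'(s^*)=1$ and $g(s)=s-o(s-s^*)$, which is perfectly compatible with $g<\mathrm{id}$ on $\opint{s^*,s^*+\delta}$: since $g\le\mathrm{id}$ is an inequality pointing the wrong way, ``derivative close to $1$'' cannot be integrated up to the identity. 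Equivalently, the ray is an integral curve of the unit field $\nabla\d_\K$ up to $s^*$, but a merely continuous vector field has non-unique integral curves (Peano gives existence, not uniqueness), so ``transporting the nearest point continuously along the ray'' does not pin the projection at $x$ beyond $s^*$. To conclude one needs uniqueness of the gradient flow, i.e.\ a local Lipschitz bound on $\nabla\d_\K$ (equivalently on $\P_\K$) --- and in your cycle that is precisely what is unavailable in the (ii)$\implica$(i) leg, because the hypomonotonicity (iii) sits on the other side of the cycle. This is why \cite{ClaSteWol95}, \cite{PolRocThi00} and \cite{ColThi10} route the hard propagation through Lipschitz single-valuedness of the projection and hypomonotonicity of (truncated) normals rather than through a soft clopen-connectedness argument; as it stands, your (ii)$\implica$(i) is a plausible program, not a proof.
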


\subsection{Functions of bounded variation}

Let $I$ be an interval of $\ar$. The set of $\H$-valued continuous functions defined on $I$ is denoted by 
$\Czero(I;\H)$. For a function $f : I \function \H$, the continuity set of a function $f : I \function \H$ is denoted by 
$\cont(f)$, while $\discont(f) := \H \setmeno \cont(f)$. For $S \subseteq I$ we write 
$\Lipcost[f,S] := \sup\{\norm{f(t)-f(s)}{}/|t-s|\ :\ s, t \in S,\ s \neq t\}$, $\Lipcost[f] := \Lipcost[f,I]$, the Lipschitz constant of $f$, and $\Lip(I;\H) := \{f : I \function \H\ :\ \Lipcost[f] < \infty\}$, the set of $\H$-valued Lipschitz continuous functions on $I$. As usual 
$\Lip_\loc(I;\H) := \{f : I \function \H\  :\ \Lipcost[f,J] < \infty \ \text{$\forall J$  compact in $I$}\}$.
If $f : I \function \ar$ and if $(f(t) - f(s))(t-s) \ge 0$ (respectively 
$(f(t) - f(s))(t-s) > 0$) we say that $f$ is \emph{increasing} (respectively \emph{strictly increasing}).

\begin{Def}
Given an interval $I \subseteq \ar$, a function $f : I \function \H$, and a subinterval $J \subseteq I$, the \emph{variation of $f$ on $J$} is defined by
\begin{equation}\notag
  \pV(f,J) := 
  \sup\left\{
           \sum_{j=1}^{m} \norm{(f(t_{j-1})- f(t_{j})}{}\ :\ m \in \en,\ t_{j} \in J\ \forall j,\ t_{0} < \cdots < t_{m} 
         \right\}.
\end{equation}
If $\pV(f,I) < \infty$ we say that \emph{$f$ is of bounded variation on $I$} and we set 
\[
  \BV(I;\H) := \{f : I \function \H\ :\ \pV(f,I) < \infty\} 
\]
and $\BV_\loc(I;\H) := \{f : I \function \H\ :\ \pV(f,J) < \infty\ \text{$\forall J$ \emph{compact} in $I$}\}$.
\end{Def}

It is well known that the completeness of $\H$ implies that every $f \in \BV(I;\H)$ admits one sided limits 
$f(t-), f(t+)$ at every point $t \in I$, with the convention that $f(\inf I-) := f(\inf I)$ if $\inf I \in I$, and that 
$f(\sup I+) := f(\sup I)$ if $\sup I \in I$. Moreover $\discont(f)$ is at most countable. We set 
\[
  \BV^\r(I;\H) := \{f \in \BV(I;\H)\ :\ f(t) = f(t+) \quad \forall t \in I\},
\]
\[
 \BV_\loc^\r(I;\H) := \{f \in \BV_\loc(I;\H)\ :\ f(t) = f(t+) \quad \forall t \in I\}, 
\]
$\Czero\BV(I;\H) := \BV(I;\H) \cap \Czero(I;\H)$, and $\Czero\BV_\loc(I;\H) := \BV_\loc(I;\H) \cap \Czero(I;\H)$. 
We have $\Lip_\loc(I;\H) \subseteq \BV_\loc(I;\H)$.

\subsection{Differential measures}\label{differential measures}

Given an interval $I$ of the real line $\mathbb{R}$, the family of Borel sets in $I$ is denoted by $\borel(I)$. If 
$\mu : \borel(I) \function \clint{0,\infty}$ is a measure, $p \in \clsxint{1,\infty}$, then the space of $\H$-valued functions which are $p$-integrable with respect to $\mu$ will be denoted by $\L^p(I, \mu; \H)$ or simply by 
$\L^p(\mu; \H)$. By $\L^\infty(I, \mu; B)$, or $\L^\infty(\mu; B)$, we denote the space of  maps $f : I \function \H$ such that there exists a $\mu$-measurable $g : I \function \H$ for which $f = g$ $\mu$-almost everywhere on $I$. For the theory of integration of vector valued functions we refer, e.g., to \cite[Chapter VI]{Lan93}. When 
$\mu = \leb^1$, where $ \leb^1$ is the one dimensional Lebesgue measure, we write 
$\L^p(I; \H) := \L^p(I,\mu; \H)$.

We recall that a \emph{$\H$-valued measure on $I$} is a map $\nu : \borel(I) \function \H$ such that 
$\nu(\bigcup_{n=1}^{\infty} B_{n})$ $=$ $\sum_{n = 1}^{\infty} \nu(B_{n})$ for every sequence $(B_{n})$ of mutually disjoint sets in $\borel(I)$. The \emph{total variation of $\nu$} is the positive measure 
$\vartot{\nu} : \borel(I) \function \clint{0,\infty}$ defined by
\begin{align}\label{tot var measure}
  \vartot{\nu}(B)
  := \sup\left\{\sum_{n = 1}^{\infty} \norm{\nu(B_{n})}{}\ :\ 
                 B = \bigcup_{n=1}^{\infty} B_{n},\ B_{n} \in \borel(I),\ 
                 B_{h} \cap B_{k} = \varnothing \text{ if } h \neq k\right\}. \notag
\end{align}
The vector measure $\nu$ is said to be \emph{with bounded variation} if $\vartot{\nu}(I) < \infty$. In this case the equality $\norm{\nu}{} := \vartot{\nu}(I)$ defines a complete norm on the space of measures with bounded variation (see, e.g. \cite[Chapter I, Section  3]{Din67}). 

If $\mu : \borel(I) \function \clint{0,\infty}$ is a positive bounded Borel measure and if $g \in \L^1(I,\mu;\H)$, then $g\mu : \borel(I) \function \H$ denotes the vector measure defined by 
\begin{equation}\label{gmu}
  g\mu(B) := \int_B g\de \mu, \qquad B \in \borel(I). \notag
\end{equation} 

Assume that $\nu : \borel(I) \function \H$ is a vector measure with bounded variation and $f : I \function \H$ and 
$\phi : I \function \mathbb{R}$ are two \emph{step maps with respect to $\nu$}, i.e. there exist 
$f_{1}, \ldots, f_{m} \in \H$, $\phi_{1}, \ldots, \phi_{m} \in \H$ and $A_{1}, \ldots, A_{m} \in \borel(I)$ mutually disjoint such that $\vartot{\nu}(A_{j}) < \infty$ for every $j$ and $f = \sum_{j=1}^{m} \indicator_{A_{j}} f_{j}$, 
$\phi = \sum_{j=1}^{m} \indicator_{A_{j}} \phi_{j}.$ Here $\indicator_{S} $ is the characteristic function of a set $S$, i.e. $\indicator_{S}(x) := 1$ if $x \in S$ and $\indicator_{S}(x) := 0$ if $x \not\in S$. For such step maps we define 
$\int_{I} \duality{f}{\de\nu} := \sum_{j=1}^{m} \duality{f_{j}}{\nu(A_{j})} \in \mathbb{R}$ and
$\int_{I} \phi \de \nu := \sum_{j=1}^{m} \phi_{j} \nu(A_{j}) \in \H$.

If $\Step(\vartot{\nu};\H)$ (resp. $\Step(\vartot{\nu})$) is the set of $\H$-valued (resp. real valued) step maps with respect to $\nu$, then the maps
$\Step(\vartot{\nu};\H)$ $\function$ $\H : f \longmapsto \int_{I} \duality{f}{\de\nu}$ and
$\Step(\vartot{\nu})$ $\function$ $\H : \phi \longmapsto \int_{I} \phi \de \nu$ 
are linear and continuous when $\Step(\vartot{\nu};\H)$ and $\Step(\vartot{\nu})$ are endowed with the 
$\L^{1}$-seminorms $\norm{f}{\L^{1}(\vartot{\nu};\H)} := \int_I \norm{f}{} \de \vartot{\nu}$ and
$\norm{\phi}{\L^{1}(\vartot{\nu})} := \int_I |\phi| \de \vartot{\nu}$. Therefore they admit unique continuous extensions 
$\mathsf{I}_{\nu} : \L^{1}(\vartot{\nu};\H) \function \mathbb{R}$ and 
$\mathsf{J}_{\nu} : \L^{1}(\vartot{\nu}) \function \H$,
and we set 
\[
  \int_{I} \duality{f}{\de \nu} := \mathsf{I}_{\nu}(f), \quad
  \int_{I} \phi\, \de\nu := \mathsf{J}_{\nu}(\phi),
  \qquad f \in \L^{1}(\vartot{\nu};\H),\quad \phi \in \L^{1}(\vartot{\nu}).
\]

The following results (cf., e.g., \cite[Section III.17.2-3, p. 358-362]{Din67}) provide the connection between functions with bounded variation and vector measures which will be implicitly used in this paper.

\begin{Thm}\label{existence of Stietjes measure}
For every $f \in \BV(I;\H)$ there exists a unique vector measure of bounded variation 
$\nu_{f} : \borel(I) \function \H$ such that 
\begin{align}
  \nu_{f}(\opint{c,d}) = f(d-) - f(c+), \qquad \nu_{f}(\clint{c,d}) = f(d+) - f(c-), \notag \\ 
  \nu_{f}(\clsxint{c,d}) = f(d-) - f(c-), \qquad \nu_{f}(\cldxint{c,d}) = f(d+) - f(c+). \notag 
\end{align}
whenever $\inf I \le c < d \le \sup I$ and the left hand side of each equality makes sense.

\noindent Conversely, if $\nu : \borel(I) \function \H$ is a vector measure with bounded variation, and if 
$f_{\nu} : I \function \H$ is defined by $f_{\nu}(t) := \nu(\clsxint{\inf I,t} \cap I)$, then $f_{\nu} \in \BV(I;\H)$ and 
$\nu_{f_{\nu}} = \nu$.
\end{Thm}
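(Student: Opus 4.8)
The plan is to realize $\nu_f$ as the vector-valued Lebesgue--Stieltjes measure attached to $f$, built by extension from the semiring of left-closed, right-open intervals and kept under control by a scalar measure coming from the variation of $f$. First I would introduce the increasing, bounded function $v : I \function \ar$, $v(t) := \pV(f, \{s \in I\ :\ s \le t\})$, which satisfies $\norm{f(t)-f(s)}{} \le v(t)-v(s)$ for $s \le t$, and I would let $\mu_v$ be the classical positive finite Stieltjes measure on $\borel(I)$ determined by $\mu_v(\clsxint{c,d}) = v(d-)-v(c-)$. On the semiring $\Sigma$ of intervals $\clsxint{c,d} \cap I$ I define the $\H$-valued set function $\lambda(\clsxint{c,d}) := f(d-)-f(c-)$; finite additivity is immediate, and passing to one-sided limits in the displayed inequality yields the domination $\norm{\lambda(A)}{} \le \mu_v(A)$ for every $A \in \Sigma$, hence for every finite disjoint union of members of $\Sigma$.

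The crux is to upgrade $\lambda$ to a genuine countably additive $\H$-valued Borel measure, and this is where I expect the only real difficulty to lie. I would first prove $\sigma$-additivity on $\Sigma$: if $\clsxint{c,d} = \bigsqcup_{n} A_n$ with the $A_n \in \Sigma$ disjoint, then for each $N$ the remainder $R_N := \clsxint{c,d} \setmeno \bigcup_{n \le N} A_n$ is again a finite disjoint union of members of $\Sigma$, and finite additivity gives $\lambda(\clsxint{c,d}) = \sum_{n \le N} \lambda(A_n) + \lambda(R_N)$ with $\norm{\lambda(R_N)}{} \le \mu_v(R_N) = \mu_v(\clsxint{c,d}) - \sum_{n\le N}\mu_v(A_n) \to 0$ by countable additivity of the scalar measure $\mu_v$; since $\sum_n \norm{\lambda(A_n)}{} \le \mu_v(\clsxint{c,d}) < \infty$ and $\H$ is complete, the series converges and $\lambda$ is $\sigma$-additive. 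I would then extend $\lambda$ to $\borel(I)$ by $\mu_v$-completion: $\Sigma$ generates an algebra $\A$ of finite disjoint unions on which $\lambda$ is $\sigma$-additive and dominated by $\mu_v$, and for $B \in \borel(I)$ I set $\nu_f(B) := \lim_k \lambda(A_k)$ for any $A_k \in \A$ with $\mu_v(A_k \triangle B) \to 0$. The limit exists because $\norm{\lambda(A_k)-\lambda(A_j)}{} \le \mu_v(A_k \triangle A_j)$ makes $(\lambda(A_k))_k$ Cauchy in the complete space $\H$; the resulting $\nu_f$ is a countably additive vector measure with $\vartot{\nu_f} \le \mu_v$, so $\vartot{\nu_f}(I) \le \pV(f,I) < \infty$.

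Finally I would read off the four interval identities and settle uniqueness and the converse, all of which are routine once the measure is in hand. By construction $\nu_f(\clsxint{c,d}) = f(d-)-f(c-)$; since $\vartot{\nu_f}$ is a finite measure, continuity from above along $\clsxint{d,e}\downarrow\{d\}$ gives the point mass $\nu_f(\{d\}) = f(d+)-f(d-)$, and the remaining three formulas follow from the decompositions $\clint{c,d} = \clsxint{c,d}\sqcup\{d\}$, $\clsxint{c,d} = \{c\}\sqcup\opint{c,d}$, and $\cldxint{c,d} = \opint{c,d}\sqcup\{d\}$ together with finite additivity. Uniqueness is immediate because $\Sigma$ is a $\pi$-system generating $\borel(I)$: for each $\phi \in \H$ the scalar measures $\duality{\nu_f(\cdot)}{\phi}$ are determined on $\Sigma$, hence everywhere, and $\H$ separates points.

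For the converse, given $\nu$ of bounded variation, the estimate $\sum_j \norm{\nu(\clsxint{t_{j-1},t_j})}{} \le \vartot{\nu}(I)$ over any subdivision shows $f_\nu \in \BV(I;\H)$ with $\pV(f_\nu,I) \le \vartot{\nu}(I)$; continuity from below of $\nu$ along $\clsxint{\inf I,s}\cap I \uparrow \clsxint{\inf I,t}\cap I$ gives $f_\nu(t-) = \nu(\clsxint{\inf I,t}\cap I)$, so $\nu_{f_\nu}(\clsxint{c,d}) = f_\nu(d-)-f_\nu(c-) = \nu(\clsxint{c,d})$, and the uniqueness just proved forces $\nu_{f_\nu} = \nu$. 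Thus the whole argument reduces to the single nontrivial point of the second paragraph, namely that the domination by the finite scalar measure $\mu_v$ plus completeness of $\H$ turn the obviously additive interval set function $\lambda$ into a countably additive Borel measure of finite total variation.
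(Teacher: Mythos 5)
The paper does not actually prove this statement: it is quoted as a classical result with a pointer to Dinculeanu \cite[Section III.17.2-3]{Din67}, so there is no internal proof to compare against. Your argument is the standard construction underlying that reference, and its core is sound: dominating the interval function $\lambda(\clsxint{c,d}) = f(d-)-f(c-)$ by the scalar Stieltjes measure $\mu_v$ of the left-continuous regularization of the variation function, getting $\sigma$-additivity on the semiring for free from that domination, extending to $\borel(I)$ by $\mu_v$-approximation in the measure algebra (completeness of $\H$ giving convergence of the Cauchy values), and recovering the point masses $\nu_f(\{d\}) = f(d+)-f(d-)$ by continuity from above, which yields the remaining three interval identities. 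The uniqueness argument via scalarization and the $\pi$-system of intervals is also fine, since the four identities of the statement prescribe $\nu_f$ on all interval types, including those reaching the endpoints of $I$.

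There is, however, one concrete gap at the very end, in the converse. You deduce $\nu_{f_\nu} = \nu$ from agreement on the sets $\clsxint{c,d} \cap I$ alone, but your uniqueness step was proved from agreement on \emph{all} four interval types; the half-open intervals with $c,d \in I$ do not separate an atom sitting at a closed right endpoint. Indeed, if $\sup I \in I$ and $\nu = a\,\delta_{\sup I}$ with $a \neq 0$, then $f_\nu(t) = \nu(\clsxint{\inf I,t} \cap I) \equiv 0$, so $\nu_{f_\nu} = 0 \neq \nu$: the claimed identity $\nu_{f_\nu} = \nu$ fails as literally written (this is a latent defect of the statement's normalization as much as of your proof — the analogous choice $f_\nu(t) = \nu(\clint{\inf I,t} \cap I)$ instead loses an atom at $\inf I$, because the paper's conventions force $f(\inf I-) = f(\inf I)$). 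To close the gap one should either verify the identities on intervals closed at $\sup I$, adopting the adjusted definition $f_\nu(\sup I) := \nu(I)$ when $\sup I \in I$ (with which your computation $f_\nu(t-) = \nu(\clsxint{\inf I,t} \cap I)$ and the $\pi$-system argument go through and do recover endpoint atoms), or restrict the converse to intervals with $\sup I \notin I$. Everything else in your write-up — the domination estimate, the remainder argument $\lnorm{\lambda(R_N)}{} \le \mu_v(R_N) \to 0$, the bound $\vartot{\nu_f} \le \mu_v$, and the $\BV$ estimate $\pV(f_\nu,I) \le \vartot{\nu}(I)$ — is correct.
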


\begin{Prop}
Let $f  \in \BV(I;\H)$, let $g : I \function \H$ be defined by $g(t) := f(t-)$, for $t \in \Int(I)$, and by $g(t) := f(t)$, if 
$t \in \partial I$, and let $V_{g} : I \function \mathbb{R}$ be defined by $V_{g}(t) := \pV(g, \clint{\inf I,t} \cap I)$. Then  
$\nu_{g} = \nu_{f}$ and $\vartot{\nu_{f}}(I) = \nu_{V_{g}}(I) = \pV(g,I)$.
\end{Prop}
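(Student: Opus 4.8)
The plan is to prove the three assertions in the order $\nu_g = \nu_f$, then $\nu_{V_g}(I) = \pV(g,I)$, and finally $\vartot{\nu_f}(I) = \pV(g,I)$, the last containing the only real difficulty. First I would observe that passing from $f$ to $g$ only alters the values of $f$ on the at most countable set $\discont(f)$, and in particular leaves every one-sided limit unchanged: for $t \in \Int(I)$ one has $g(t-) = \lim_{s\to t-} f(s-) = f(t-)$ and $g(t+) = \lim_{s \to t+} f(s-) = f(t+)$, while $g = f$ on $\partial I$, so that the endpoint conventions for $g$ and $f$ coincide. Hence $g(t-) = f(t-)$ and $g(t+) = f(t+)$ for every $t \in I$, and the four interval formulas of Theorem~\ref{existence of Stietjes measure} yield identical values of $\nu_g$ and $\nu_f$ on every subinterval of $I$. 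Since $\nu_f$ is, by that same theorem, the \emph{unique} vector measure with those values, I conclude $\nu_g = \nu_f$, and therefore $\vartot{\nu_f} = \vartot{\nu_g}$.

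Next, since replacing values by left limits cannot increase the variation (each increment $\norm{g(t_j)-g(t_{j-1})}{}$ is a limit of increments of $f$, so $\pV(g,I) \le \pV(f,I) < \infty$), the function $V_g$ is a finite nondecreasing real $\BV$ function with $V_g(\inf I) = 0$ and $\sup_t V_g(t) = \pV(g,I)$. Applying Theorem~\ref{existence of Stietjes measure} to $V_g$, for compact $I = \clint{a,b}$ the endpoint conventions give $\nu_{V_g}(I) = V_g(b+) - V_g(a-) = V_g(b) - V_g(a) = \pV(g,I)$, and the general case follows by a monotone limit along a compact exhaustion of $I$. This settles $\nu_{V_g}(I) = \pV(g,I)$.

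It remains to establish $\vartot{\nu_g}(I) = \pV(g,I)$, which I would split into two inequalities. The easy one, $\pV(g,I) \le \vartot{\nu_g}(I)$, goes as follows: given $t_0 < \cdots < t_m$ in $I$, the sets $\clsxint{t_{j-1},t_j}$, $j = 1,\dots,m$, are pairwise disjoint Borel subsets of $I$ (replacing the last by $\clint{t_{m-1},t_m}$ when $t_m = \sup I \in I$), and the left-continuity of $g$ at interior points together with the boundary conventions gives $\nu_g(\clsxint{t_{j-1},t_j}) = g(t_j) - g(t_{j-1})$; summing norms and taking the supremum over all such families yields the claim. For the reverse inequality I would prove the stronger fact $\vartot{\nu_g} \le \nu_{V_g}$ as measures, which then closes the loop, since with the preceding steps it forces $\pV(g,I) \le \vartot{\nu_g}(I) \le \nu_{V_g}(I) = \pV(g,I)$ and hence $\vartot{\nu_f}(I) = \vartot{\nu_g}(I) = \pV(g,I) = \nu_{V_g}(I)$. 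Because $\vartot{\nu_g}(B) = \sup \sum_n \norm{\nu_g(B_n)}{}$ over Borel partitions and $\nu_{V_g}$ is positive, it suffices to show $\norm{\nu_g(B)}{} \le \nu_{V_g}(B)$ for every Borel $B$. On a half-open interval this is immediate: using left-continuity of $g$ and additivity of the variation,
\[
  \norm{\nu_g(\clsxint{c,d})}{} = \norm{g(d)-g(c)}{} \le \pV(g,\clsxint{c,d}) = V_g(d-) - V_g(c) \le V_g(d-) - V_g(c-) = \nu_{V_g}(\clsxint{c,d}).
\]

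The hard part will be upgrading this interval estimate to arbitrary Borel sets. A bound of the form $\norm{\nu(B)}{} \le \mu(B)$ does not propagate from a generating $\pi$-system by a monotone-class argument, because the norm destroys the linearity on which such arguments rely; this is the main obstacle. I expect to remove it in one of two ways. The first is to show that finite partitions of $I$ into intervals are cofinal for computing $\vartot{\nu_g}$ — approximating each member of a given finite Borel partition, in $\vartot{\nu_g}$-measure and disjointly, by finite unions of intervals via the outer regularity of the finite measure $\vartot{\nu_g}$ and the fact that every open subset of $I$ is a countable disjoint union of intervals — and then bounding the interval sum by $\sum_j \nu_{V_g}(J_j) = \nu_{V_g}(I) = \pV(g,I)$. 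The second, and cleaner to state, is to establish $\nu_g \ll \nu_{V_g}$, write $\nu_g = k\,\nu_{V_g}$, deduce $\norm{k}{} \le 1$ $\nu_{V_g}$-almost everywhere from the interval estimate through the Lebesgue differentiation theorem on $\ar$, and conclude $\norm{\nu_g(B)}{} \le \int_B \norm{k}{}\,\de\nu_{V_g} \le \nu_{V_g}(B)$; here the technical point displaced into the obstacle is the absolute continuity itself, which again rests on the interval estimate and the regularity of the two measures.
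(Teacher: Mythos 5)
The paper does not prove this proposition: it is imported, together with the preceding theorem, from Dinculeanu \cite[Section III.17.2-3]{Din67}, so there is no internal proof to compare against; judged on its own terms, your plan is correct and essentially reconstructs the classical argument from that source. Step 1 is right: $g(t\pm)=f(t\pm)$ at every $t\in I$ (including the endpoint conventions), so $\nu_g$ and $\nu_f$ take the same values on all subintervals and the uniqueness clause of Theorem \ref{existence of Stietjes measure} gives $\nu_g=\nu_f$ — though you should record $\pV(g,I)\le\pV(f,I)<\infty$ \emph{before} invoking $\nu_g$, not afterwards. In the easy inequality $\pV(g,I)\le\vartot{\nu_g}(I)$, your endpoint fix (closing the last interval when $t_m=\sup I\in I$) is exactly what the conventions require, and the family stays disjoint; to match the paper's definition of $\vartot{\nu_g}$ one completes the family to a partition of $I$ by adjoining the complement, which only helps. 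Most importantly, you correctly isolate the one genuine difficulty — $B\mapsto\norm{\nu_g(B)}{}$ is not additive, so the interval bound does not pass to Borel sets by a monotone-class argument — and your first remedy does close it: $\vartot{\nu_g}$ and $\nu_{V_g}$ are finite positive Borel measures on an interval of $\ar$, hence outer regular, so given Borel $B$ one picks an open $U\supseteq B$ with $\vartot{\nu_g}(U\setmeno B)<\eps$ and $\nu_{V_g}(U\setmeno B)<\eps$, writes $U$ as a countable disjoint union of relatively open intervals, and gets $\norm{\nu_g(B)}{}\le\norm{\nu_g(U)}{}+\vartot{\nu_g}(U\setmeno B)\le\nu_{V_g}(U)+\eps\le\nu_{V_g}(B)+2\eps$ from the interval estimate and the triangle inequality; summing over an arbitrary countable Borel partition then yields $\vartot{\nu_g}\le\nu_{V_g}$, as you say, and the chain $\pV(g,I)\le\vartot{\nu_g}(I)\le\nu_{V_g}(I)=\pV(g,I)$ finishes.

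Two loose ends are worth a line each but are not gaps. First, in the compact-exhaustion limit of Step 2 you need $V_g(a_n-)\to 0$ when $\inf I\notin I$; this is true but requires a short argument (if $\lim_{t\downarrow\inf I}\pV(g,\cldxint{\inf I,t})=L>0$, stacking disjoint partitions each of variation $>L/2$ near $\inf I$ contradicts $\pV(g,I)<\infty$). Second, your alternative Radon--Nikodym route additionally uses that $\H$, being a Hilbert space, has the Radon--Nikodym property (fine), but, as you yourself note, its absolute-continuity step reduces to the very same regularity-plus-intervals argument, so it buys nothing over the first route; the first remedy should be regarded as the proof.
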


The measure $\nu_{f}$ is called the \emph{Lebesgue-Stieltjes measure} or \emph{differential measure} of $f$. Let us see the connection between  the differential measure and the distributional derivative. If $f \in \BV(I;\H)$ and if 
$\overline{f}: \mathbb{R} \function \H$ is defined by
\begin{equation}\label{extension to R}
  \overline{f}(t) :=
  \begin{cases}
    f(t) 	& \text{if $t \in I$}, \\
    f(\inf I)	& \text{if $\inf I \in \mathbb{R}$, $t \not\in I$, $t \le \inf I$},\\
    f(\sup I)	& \text{if $\sup I \in \mathbb{R}$, $t \not\in I$, $t \ge \sup I,$}
  \end{cases}
\end{equation}
then, as in the scalar case, it turns out (cf. \cite[Section 2]{Rec11a}) that $\nu_{f}(B) = \D \overline{f}(B)$ for every 
$B \in \borel(\mathbb{R})$, where $\D\overline{f}$ is the distributional derivative of $\overline{f}$, i.e.
\[
  - \int_\mathbb{R} \varphi'(t) \overline{f}(t) \de t = \int_{\mathbb{R}} \varphi \de \D \overline{f}, 
  \qquad \forall \varphi \in \Czero_{c}^{1}(\mathbb{R};\mathbb{R}),
\]
where $\Czero_{c}^{1}(\mathbb{R};\mathbb{R})$ is the space of continuously differentiable functions on 
$\mathbb{R}$ with compact support. Observe that $\D \overline{f}$ is concentrated on $I$: 
$\D \overline{f}(B) = \nu_f(B \cap I)$ for every $B \in \borel(I)$, hence in the remainder of the paper, if 
$f \in \BV(I,\H)$ then we will simply write
\begin{equation}
  \D f := \D\overline{f} = \nu_f, \qquad f \in \BV(I;\H),
\end{equation}
and from the previous discussion it follows that 
\begin{equation}\label{D-TV-pV}
  \norm{\D f}{} = \vartot{\D f}(I) = \norm{\nu_f}{}  = \pV(f,I), \qquad \forall f \in \BV(I;\H).
\end{equation}
If $I$ is bounded and $p \in \clint{1,\infty}$, then the classical Sobolev space $\Sob^{1,p}(I;\H)$ consists of those functions $f \in \Czero(I;\H)$ for which $\D f = g\leb^1$ for some $g \in \L^p(I;\H)$ and 
$\Sob^{1,\infty}(I;\H) = \Lip(I;\H)$. Let us also recall that if $f \in \Sob^{1,1}(I;\H)$ then the derivative $f'(t)$ exists 
$\leb^1$-a.e. in $t \in I$, $\D f = f' \leb^1$, and $\V(f,I) = \int_I\norm{f'(t)}{}\de t$ (see e.g. \cite[Appendix]{Bre73}).

In \cite[Lemma 6.4 and Theorem 6.1]{Rec11a} it is proved that

\begin{Prop}\label{P:BV chain rule}
Assume that $I, J \subseteq \ar$ are intervals and that $h : I \function J$ is nondecreasing.
\begin{itemize}
\item[(i)]
  $\D h(h^{-1}(B)) = \leb^{1}(B)$ for every $B \in \borel(h(\cont(h)))$.
\item[(ii)]
 If $f \in \Lip(J;\H)$ and $g : I \function \H$ is defined by
\begin{equation}\notag
  g(t) := 
  \begin{cases}
    f'(h(t)) & \text{if $t \in \cont(h)$} \\
    \ \\
    \dfrac{f(h(t+)) - f(h(t-))}{h(t+) - h(t-)} & \text{if $t \in \discont(h)$}
  \end{cases},
\end{equation}
then $f \circ h \in \BV(I;\H)$ and $\D\ \!(f \circ h) = g \D h$. This result holds even if $f'$ is replaced by any of its 
$\leb^{1}$-representatives.
\end{itemize}
\end{Prop}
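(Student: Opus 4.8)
The plan is to prove both items by identifying the relevant (vector, resp.\ positive) measures through their action on the $\pi$-system of subintervals of $I$, using Theorem \ref{existence of Stietjes measure} to pass back and forth between $\BV$ functions and their differential measures. Throughout I would exploit that, $h$ being nondecreasing, $\discont(h)$ is at most countable, $\D h$ is a \emph{positive} measure whose atoms are exactly the points $t \in \discont(h)$, with $\D h(\{t\}) = h(t+) - h(t-)$, and whose non-atomic part is carried by $\cont(h)$.

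For (i), the content is that $h$ transports the restriction of $\D h$ to $\cont(h)$ onto $\leb^{1}$ on $h(\cont(h))$; the atoms living over $\discont(h)$ have to be set aside, so the preimage is to be read within $\cont(h)$. First I would note that $B \longmapsto \D h(h^{-1}(B) \cap \cont(h))$ is a positive Borel measure on $h(\cont(h))$, because $h^{-1}$ preserves countable disjoint unions. Since two measures coinciding on a generating $\pi$-system are equal, it is enough to test on traces $\clsxint{c,d} \cap h(\cont(h))$ with $c = h(s)$, $d = h(t)$ and $s < t$ in $\cont(h)$. By monotonicity $\{x : c \le h(x) < d\}$ is an interval, and since no jump of $h$ straddles the continuity-values $c$ and $d$ its $\D h$-measure is $d - c$; subtracting the atoms carried by the jumps lying strictly inside it leaves exactly $\leb^{1}(\clsxint{c,d} \cap h(\cont(h)))$, because the open ``gaps'' $\opint{h(\tau-),h(\tau+)}$ opened by those jumps $\tau$ meet $h(\cont(h))$ only in a $\leb^{1}$-null set. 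The final clause of (ii) is then immediate: if $f'$ is altered on a $\leb^{1}$-null set $N \subseteq J$, item (i) shows $h^{-1}(N) \cap \cont(h)$ is $\D h$-null, so $f' \circ h$ is unchanged $\D h$-a.e.\ on $\cont(h)$.

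For (ii), both $f \circ h \in \BV(I;\H)$ and the finiteness of the right-hand side follow from the Lipschitz estimate $\norm{f(h(t)) - f(h(s))}{} \le \Lipcost[f]\,|h(t) - h(s)|$, which yields $\V(f \circ h, I) \le \Lipcost[f]\,\V(h, I) < \infty$ and $\norm{g(t)}{} \le \Lipcost[f]$ for all $t$, so that $g \in \L^{1}(\D h;\H)$ and $g\,\D h$ is a vector measure with bounded variation. By Theorem \ref{existence of Stietjes measure} it then suffices to show that $\D(f \circ h)$ and $g\,\D h$ agree on every $\opint{a,b}$ with $a, b \in \cont(h)$. On the left, continuity of $f$ gives $\D(f \circ h)(\opint{a,b}) = f(h(b-)) - f(h(a+)) = f(h(b)) - f(h(a))$. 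On the right I would split $\D h$ over $\opint{a,b}$ into its atomic part, supported on the jumps $t_{j} \in \opint{a,b} \cap \discont(h)$, and its non-atomic part: the definition of $g$ makes the atomic contribution equal to $\sum_{j}\big(f(h(t_{j}+)) - f(h(t_{j}-))\big)$, while (i) turns the non-atomic contribution into $\int_{\opint{a,b} \cap \cont(h)} (f' \circ h)\,\de \D h = \int_{h(\opint{a,b} \cap \cont(h))} f'\,\de\leb^{1}$.

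It remains to reconcile the two sides via the fundamental theorem of calculus for the Lipschitz (hence $\Sob^{1,1}$) map $f$, i.e.\ $f(y) - f(x) = \int_{\opint{x,y}} f'\,\de\leb^{1}$. Writing $\opint{h(a),h(b)}$ as the disjoint union of $h(\opint{a,b} \cap \cont(h))$ and the gaps $\opint{h(t_{j}-),h(t_{j}+)}$, the fundamental theorem gives $\int_{h(\opint{a,b} \cap \cont(h))} f'\,\de\leb^{1} = \big(f(h(b)) - f(h(a))\big) - \sum_{j}\big(f(h(t_{j}+)) - f(h(t_{j}-))\big)$; adding back the atomic contribution the jump terms cancel and the right-hand side collapses to $f(h(b)) - f(h(a))$, matching the left. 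I expect the main obstacle to be exactly this bookkeeping: one must verify that the gaps $\opint{h(t_{j}-),h(t_{j}+)}$ are pairwise disjoint and exhaust $\opint{h(a),h(b)} \setmeno h(\cont(h))$ up to a $\leb^{1}$-null set, and that all the series converge absolutely — which follows from $\sum_{j}(h(t_{j}+) - h(t_{j}-)) \le \V(h,\opint{a,b})$ together with the uniform bound $\norm{g}{} \le \Lipcost[f]$ — so that the cancellation is legitimate.
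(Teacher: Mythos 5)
The paper gives no proof of this proposition at all: it is quoted verbatim from \cite[Lemma 6.4 and Theorem 6.1]{Rec11a}. Your proposal can therefore only be measured against the argument of that source, and it reconstructs the same standard route: reduce (i) to traces of half-open intervals with endpoints in $h(\cont(h))$ by a $\pi$-system argument; for (ii), split $\D h$ on $\opint{a,b}$ with $a,b \in \cont(h)$ into its atomic part, carried by $\discont(h)$ with masses $h(t+)-h(t-)$, and its diffuse part, carried by $\cont(h)$ and pushed forward by $h$ onto the restriction of $\leb^{1}$ to $h(\cont(h))$ thanks to (i); then close with the fundamental theorem of calculus for the Lipschitz map $f$ (legitimate here because $\H$ is Hilbert, consistently with the paper's remark that $\Sob^{1,\infty}(I;\H) = \Lip(I;\H)$ with $\D f = f'\leb^{1}$), the jump terms cancelling against $\int_{\opint{h(t_j-),h(t_j+)}} f' \de \leb^{1}$. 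Your absolute-convergence check via $\sum_j (h(t_j+)-h(t_j-)) \le \V(h,\opint{a,b})$, the disjointness of the gaps, and the null-set argument for the $\leb^{1}$-representative clause are all correct (implicitly, as in the statement itself, $h$ is bounded, otherwise $f \circ h \in \BV(I;\H)$ may fail; this is harmless in the paper's localized applications).

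Two remarks. First, you were right to read (i) with the preimage taken inside $\cont(h)$: taken literally, $\D h(h^{-1}(B)) = \leb^{1}(B)$ can fail when an atom of $\D h$ sits over a value also attained at continuity points. For instance, let $h(x)=x$ on $\clint{0,1/2}$, $h \equiv 1/2$ on $\clint{1/2,1}$, $h(x)=x+1$ on $\cldxint{1,2}$: then $B=\{1/2\} \in \borel(h(\cont(h)))$ but $h^{-1}(B)=\clint{1/2,1}$ carries the atom at $t=1$, so $\D h(h^{-1}(B)) = 3/2 \neq 0 = \leb^{1}(B)$. All the paper's applications (e.g.\ the computation of $\D\ell_{\C}(\{t \in \cont(\ell_{\C}) : \ell_{\C}(t) \in Z\})$ in the proof of Theorem \ref{main thm}) use exactly the restricted form, so your reading is the needed one. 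Second --- the only genuine slip --- your intermediate claim that $\D h(\{x : c \le h(x) < d\}) = d-c$, followed by subtraction of ``the atoms carried by the jumps lying strictly inside'', fails in an edge case that the no-straddling observation does not exclude: a jump $\tau$ whose gap $\opint{h(\tau-),h(\tau+)}$ lies inside $\clint{c,d}$ but whose value $h(\tau)$ equals $d$, so that $\tau$ escapes the preimage. Concretely, take $h(x)=x$ on $\clsxint{0,1}$, $h(1)=2$, $h \equiv 2$ on $\cldxint{1,2}$, and $c = 0 = h(0)$, $d = 2 = h(3/2)$ (both continuity values): the preimage is $\clsxint{0,1}$, with $\D h$-mass $1$ and containing no atom, whereas your recipe returns $d-c = 2$. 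The repair is routine and stays within your scheme: show that $\D h\bigl(h^{-1}(\clsxint{c,d}) \cap \cont(h)\bigr)$ and $\leb^{1}\bigl(\clsxint{c,d} \cap h(\cont(h))\bigr)$ both equal $(d-c)$ minus the total length of the gaps $\opint{h(\tau-),h(\tau+)}$ contained in $\clint{c,d}$, regardless of where the values $h(\tau)$ fall, using --- as you already observed --- that such gaps never meet $h(\cont(h))$. With this correction the proposal is a complete and correct proof.
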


\subsection{Set-valued functions with bounded retraction}

Let us now recall the asymmetric extendend distance called \emph{excess}. The material in this subsection 
is taken from \cite{Mor74a, Mor74b}.

\begin{Def}
If $\parti(\H)$ denotes the family of all subsets of $\H$, then the \emph{excess} is the function 
$\e : \parti(\H)\times\parti(\H) \function \clint{0,\infty}$ defined by
\begin{equation}\notag
  \e(\Z_1,\Z_2) := \sup_{z_1 \in \Z_1} \d(z_1,\Z_2) = 
  \inf\{\rho \ge 0\ :\ \Z_1 \subseteq \Z_2 + \overline{B}_\rho(0)\}, \qquad \Z_1, \Z_2 \in \parti(\H).
\end{equation}
The extended number $\e(\Z_1,\Z_2)$ is also called \emph{excess of $\Z_1$ over $\Z_2$}.
\end{Def}

Observe that $\e(\void, \Z) = 0$ for every $\Z \in \parti(\H)$ and $\e(\Z, \void) = \infty$ for every 
$\Z \in \parti(\H) \setmeno \{\void\}$.
The following facts are easy to prove (see \cite[Section 2a]{Mor74b}):
\begin{alignat}{3}
  & \e(\Z_1,\Z_2) = 0 \ \Longleftrightarrow \Z_1 \subseteq \overline{\Z_2} & \qquad & \forall \Z_1, \Z_2 \in \parti(\H), \notag \\
  & \e(\Z_1,\Z_2) \le \e(\Z_1,\Z_3) + \e(\Z_3,\Z_2) & \qquad & \forall \Z_1, \Z_2, \Z_3 \in \parti(\H), \notag \\
  & \e(\Z_1,\Z_2) = \e(\overline{\Z_1},\Z_2) = \e(\Z_1,\overline{\Z_2}) = \e(\overline{\Z_1},\overline{\Z_2})
  & \qquad & \forall \Z_1, \Z_2 \in \parti(\H). \notag
\end{alignat}

\begin{Def}
Given an interval $I \subseteq \ar$, $\mathscr{F} \subseteq \parti(\H)$, a 
function $\C : I \function \mathscr{F}$, and a subinterval $J \subseteq I$, the 
\emph{retraction of $\C$ on $J$} is defined by
\begin{equation}\label{retraction}
  \ret(\C,J) := 
  \sup\left\{
           \sum_{j=1}^{m} \e(\C(t_{j-1}),\C(t_{j}))\ :\ m \in \en,\ t_{j} \in J\ \forall j,\ t_{0} < \cdots < t_{m} 
         \right\}.
\end{equation}
If $\ret(\C,I) < \infty$ we say that \emph{$\C$ is of bounded retraction on $I$} and we set 
\[
  \BR(I;\mathscr{F}) := \big\{\C : I \function \mathscr{F}\ :\ \ret(\C,I) < \infty\big\}, 
  \qquad \mathscr{F} \subseteq \parti(\H).
\] 
If  $\ret(\C,J) < \infty$ for every interval $J$ compact in $I$, we say that \emph{$\C$} has 
\emph{local bounded retraction} (or \emph{finite retraction on $I$}) and we set 
\[
\BR_\loc(I;\mathscr{F}) := \big\{\C : I \function \mathscr{F}\ :\ \ret(\C,J) < \infty\ \text{$\forall J$ compact in $I$}\big\}, \qquad  \mathscr{F} \subseteq \parti(\H).
\]
\end{Def}

\begin{Def}
Assume that $I$ is an interval of $\ar$ and that $\C \in \BR_\loc(I;\parti(\H))$. If $t \in I$ we say that 
\emph{$\C$ is left-continuous at $t$} if there exists $\lim_{s \to t-} \e(\C(s), \C(t)) = 0$, and we say that 
\emph{$\C$ is right-continuous at $t$} if there exists $\lim_{s \to t+} \e(\C(t), \C(s)) = 0$, with the convention that 
$\lim_{s \to t-} \e(\C(s), \C(t)) = 0$ if $t = \inf I$, and that $\lim_{s \to t+} \e(\C(t), \C(s)) = 0$ if $t = \sup I$. We say that \emph{$\C$ is continuous at $t$} if it is left-continuous and right-continuous at $t$. We set
\[
  \cont(\C) := \{t \in I\ :\ \text{\emph{$\C$ is continuous at $t$}}\}, \qquad \discont(\C) := I \setmeno \cont(\C).
\]
and
\begin{equation}
  \C(t+) 
  := \{x \in \H\ :\ \lim_{s \to t+} \d(x,\C(s)) = 0\}, \qquad t \in I \cup \inf I,
\end{equation}
\begin{equation}
  \C(t-) 
  := \{x \in \H\ :\ \lim_{s \to t-} \d(x,\C(s)) = 0\}, \qquad t \in I \cup \sup I,
\end{equation}
together with $\C(t-) := \C(t)$ if $t = \inf I$, and $\C(t+) := \C(t)$ if $t = \sup I$.
\end{Def}

The following proposition is proved in \cite[Proposition 4b, Proposition 4c, Section 4e]{Mor74b}.

\begin{Prop}
Assume that $I$ is an interval of $\ar$ such that $t_0 := \inf I \in I$, and assume that $\C \in \BR_\loc(I;\parti(\H))$. 
Let $\ell_\C : I \function \clsxint{0,\infty}$ be defined by
\begin{equation}\label{ret}
  \ell_\C(t) := \ret(\C;\clint{t_0,t}), \qquad t \in I.
\end{equation}
Then $\ret(\C;\clint{a,b}) = \ret(\C;\clint{a,c}) + \ret(\C;\clint{c,b})$ whenever $a, b \in I$ with $a < c < b$, and
\[
  \e(\C(s),\C(t)) \le \ell_\C(t) - \ell_\C(s) \qquad \forall t, s \in I,\ s \le t,
\]
thus if $\C(t_*) \neq \void$ for some $t_* \in I$, then $\C(t) \neq \void$ for every $t \ge t_*$. \newline
If $t \in I$, $t \neq \sup I$, then $\C(t+)$ is a closed subset of $\H$ and
\[
  \ell_\C(t+) - \ell_\C(t) = \lim_{s \to t+} \e(\C(t), \C(s)) \le \e(\C(t), \C(t+))
\] 
and the following four statements \emph{(a)-(d)} are equivalent:
\begin{itemize}
\item[(a)]
$\C$ is right-continuous at $t$,
\item[(b)]
$\ell_\C$ is right-continuous at $t$,
\item[(c)]
$\C(t) \subseteq \C(t+)$,
\item[(d)]
$\e(\C(t),\C(t+)) = 0$.
\end{itemize} 
Instead if $t \in I$, $t \neq \inf I$, then $\C(t-)$ is a closed subset of $\H$ and
\[
   \e(\C(t-), \C(t)) \le \ell_\C(t) - \ell_\C(t-) = \lim_{s \to t-} \e(\C(s), \C(t)), 
\] 
\[
   \e(\C(s), \C(t-)) \le \ell_\C(t-) - \ell_\C(s) \qquad \forall s, t \in I,\ s < t,
\] 
thus we have that $\C(t-) \neq \void$ if $\C(s) \neq \void$ for some $s < t$.
Therefore the following four statements \emph{(e)-(h)} are equivalent:
\begin{itemize}
\item[(e)]
$\C$ is left-continuous at $t$,
\item[(f)]
$\ell_\C$ is left-continuous at $t$,
\item[(g)]
$\C(t-) \subseteq \overline{\C(t)}$,
\item[(h)]
$\e(\C(t-),\C(t)) = 0$.
\end{itemize} 
\end{Prop}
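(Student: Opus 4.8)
The plan is to treat $\ell_\C$ as the exact analogue of the variation function of a $\BV$ map, the only essential differences being the asymmetry of $\e$ and the need to invoke completeness of $\H$ to give substance to the one-sided limit sets $\C(t\pm)$. First I would establish the additivity $\ret(\C;\clint{a,b}) = \ret(\C;\clint{a,c}) + \ret(\C;\clint{c,b})$ exactly as for the total variation: the inequality $\ge$ comes from concatenating an arbitrary subdivision of $\clint{a,c}$ with one of $\clint{c,b}$, while $\le$ comes from inserting the point $c$ into an arbitrary subdivision of $\clint{a,b}$ and using subadditivity $\e(\C(t_{j-1}),\C(t_j)) \le \e(\C(t_{j-1}),\C(c)) + \e(\C(c),\C(t_j))$ to see that refinement only increases the defining sums, then splitting at $c$. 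From additivity, $\ell_\C(t) - \ell_\C(s) = \ret(\C;\clint{s,t})$ for $s \le t$, and since $\{s,t\}$ is a subdivision of $\clint{s,t}$ the bound $\e(\C(s),\C(t)) \le \ell_\C(t) - \ell_\C(s)$ is immediate. Because $\e(\Z,\void) = \infty$ for $\Z\neq\void$, this bound forces $\C(t)\neq\void$ for $t\ge t_*$ once $\C(t_*)\neq\void$.

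Next I would record that $\ell_\C$ is nondecreasing, hence has one-sided limits $\ell_\C(t\pm)$, and that the retraction bound makes the family $\{\C(s)\}$ Cauchy for the excess as $s\to t\pm$: for $t<s'<s$ one has $\e(\C(s'),\C(s)) \le \ell_\C(s)-\ell_\C(s') \to 0$, and symmetrically from the left. Closedness of $\C(t\pm)$ is cheap: if $x_n\in\C(t+)$ and $x_n\to x$, then $\d(x,\C(s)) \le \norm{x-x_n}{} + \d(x_n,\C(s))$ and the $1$-Lipschitz character of $\d(\cdot,\C(s))$ gives $x\in\C(t+)$. Completeness of $\H$, applied to the Cauchy family, furnishes actual points of $\C(t\pm)$ and hence the nonemptiness claims; the same completeness argument yields the one-sided bound $\e(\C(s),\C(t-)) \le \ell_\C(t-)-\ell_\C(s)$, by passing to a limit point $z\in\C(t-)$ of near-projections $z_{s'}\in\C(s')$ of a fixed $x\in\C(s)$.

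The heart of the matter is the pair of jump formulas $\ell_\C(t+)-\ell_\C(t) = \lim_{s\to t+}\e(\C(t),\C(s))$ and $\ell_\C(t)-\ell_\C(t-) = \lim_{s\to t-}\e(\C(s),\C(t))$, together with the comparisons $\lim_{s\to t+}\e(\C(t),\C(s)) \le \e(\C(t),\C(t+))$ and $\e(\C(t-),\C(t)) \le \ell_\C(t)-\ell_\C(t-)$. For the equalities I would isolate the extreme interval of a subdivision: given $t=t_0<\cdots<t_m=s$, bound $\sum_{j\ge 2}\e(\C(t_{j-1}),\C(t_j))$ by $\ell_\C(s)-\ell_\C(t_1) \le \ell_\C(s)-\ell_\C(t+)$, so that $\ell_\C(s)-\ell_\C(t) \le \sup_{u\in\opint{t,s}}\e(\C(t),\C(u)) + \ell_\C(s)-\ell_\C(t+)$; letting $s\to t+$ and combining with the trivial $\e(\C(t),\C(s))\le\ret(\C;\clint{t,s})$ pins the limit to $\ell_\C(t+)-\ell_\C(t)$ (the left equality is symmetric, isolating the last interval). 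The genuinely delicate point — the one I expect to be the main obstacle — is the comparison with $\C(t+)$, because $\e$ is a supremum of distances and suprema do not commute with limits. I would route around this via the triangle inequality $\e(\C(t),\C(s)) \le \e(\C(t),\C(t+)) + \e(\C(t+),\C(s))$ and prove $\lim_{s\to t+}\e(\C(t+),\C(s)) = 0$ with a \emph{uniform} estimate: fixing $\eps>0$ and $s_0>t$ with $\ell_\C(s_0)-\ell_\C(t+)<\eps$, every $y\in\C(t+)$ admits some $s''\in\opint{t,s}$ with $\d(y,\C(s''))<\eps$, whence $\d(y,\C(s)) \le \d(y,\C(s'')) + \e(\C(s''),\C(s)) < 2\eps$ for all $t<s\le s_0$, the bound $2\eps$ being independent of $y$. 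This gives the desired comparison, and the left-hand inequality $\e(\C(t-),\C(t))\le\ell_\C(t)-\ell_\C(t-)$ follows by the same device, inserting $\C(t-)$ between $\C(s)$ and $\C(t)$.

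Finally the two blocks of equivalences follow by unwinding definitions. On the right, $(a)\Leftrightarrow(b)$ is the equality $\lim_{s\to t+}\e(\C(t),\C(s)) = \ell_\C(t+)-\ell_\C(t)$; $(c)\Leftrightarrow(d)$ is $\e(\Z_1,\Z_2)=0\Leftrightarrow\Z_1\subseteq\overline{\Z_2}$ together with closedness of $\C(t+)$; $(a)\Rightarrow(c)$ holds because $\d(x,\C(s))\le\e(\C(t),\C(s))\to 0$ for $x\in\C(t)$, and $(d)\Rightarrow(a)$ is the comparison $\lim_{s\to t+}\e(\C(t),\C(s))\le\e(\C(t),\C(t+))=0$. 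On the left, $(e)\Leftrightarrow(f)$ and $(g)\Leftrightarrow(h)$ are identical in spirit; $(f)\Rightarrow(h)$ is the inequality $\e(\C(t-),\C(t))\le\ell_\C(t)-\ell_\C(t-)=0$, while $(h)\Rightarrow(e)$ uses $\e(\C(s),\C(t)) \le \e(\C(s),\C(t-)) + \e(\C(t-),\C(t))$ with the second term zero and $\e(\C(s),\C(t-))\le\ell_\C(t-)-\ell_\C(s)\to 0$.
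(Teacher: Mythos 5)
A preliminary remark on the comparison: the paper does not actually prove this proposition --- it is quoted verbatim from Moreau, with a citation to \cite{Mor74b} --- so your attempt can only be measured against that classical argument, which it reconstructs along the same lines (additivity of the retraction via refinement of subdivisions, a subdivision sandwich for the jump formulas, a uniform excess estimate to compare with $\C(t\pm)$, completeness of $\H$ for $\C(t-)$). Most of your plan is correct, and you rightly identified the uniform estimate proving $\lim_{s\to t+}\e(\C(t+),\C(s))=0$ as the delicate point; that estimate is carried out correctly. There are, however, two concrete gaps. The first is that your sandwich does not ``pin the limit'': isolating the first interval of a subdivision gives $\ell_\C(t+)-\ell_\C(t)\le\sup_{u\in\opint{t,s}}\e(\C(t),\C(u))$ for every $s>t$, and the trivial bound $\e(\C(t),\C(s))\le\ell_\C(s)-\ell_\C(t)$ gives the matching upper bound, but together these only identify $\limsup_{s\to t+}\e(\C(t),\C(s))=\ell_\C(t+)-\ell_\C(t)$: the witnesses $u$ produced by near-optimal subdivisions sit at uncontrolled positions in $\opint{t,s}$, so the $\liminf$ is untouched, while the statement asserts that the limit exists. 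The missing one-line idea: for $t<s'<s$ the triangle inequality and the basic bound give $\e(\C(t),\C(s))\le\e(\C(t),\C(s'))+\ell_\C(s)-\ell_\C(s')$, so $s\longmapsto\e(\C(t),\C(s))-\ell_\C(s)$ is nonincreasing to the right of $t$ and the one-sided limit exists (equivalently, a lower bound at a witness $u^*$ transfers backward: $\e(\C(t),\C(s))\ge\e(\C(t),\C(u^*))-(\ell_\C(u^*)-\ell_\C(s))$ for $s<u^*$); symmetrically $s\longmapsto\e(\C(s),\C(t))+\ell_\C(s)$ is nondecreasing to the left of $t$. With existence in hand, your sandwich identifies the values.

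The second gap is in the construction of points of $\C(t-)$: ``passing to a limit point $z\in\C(t-)$ of near-projections $z_{s'}\in\C(s')$ of a fixed $x\in\C(s)$'' fails as stated, because the near-projections of a \emph{fixed} point onto the (nonconvex!) sets $\C(s')$ are merely bounded and need not converge strongly, while a weak limit point need not belong to $\C(t-)$ since $\d(\cdot,\C(u))$ is not weakly lower semicontinuous for nonconvex $\C(u)$ (think of an orthonormal sequence on a sphere). The standard repair, consistent with your appeal to the family being ``Cauchy for the excess,'' is to chain the projections: pick $s_n\uparrow t$, let $z_1$ be a near-projection of $x$ onto $\C(s_1)$ and $z_{n+1}$ a near-projection of $z_n$ (not of $x$) onto $\C(s_{n+1})$, so that $\norm{z_{n+1}-z_n}{}\le\ell_\C(s_{n+1})-\ell_\C(s_n)+\eps 2^{-n}$ telescopes; then $(z_n)$ is strongly Cauchy, its limit $z$ lies in $\C(t-)$ because $\d(z_m,\C(u))\le\ell_\C(t-)-\ell_\C(s_m)$ for all $u>s_m$, and $\norm{x-z}{}\le\ell_\C(t-)-\ell_\C(s)+2\eps$, which yields both the nonemptiness claim and $\e(\C(s),\C(t-))\le\ell_\C(t-)-\ell_\C(s)$. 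Finally, a wording slip: to get $\e(\C(t-),\C(t))\le\ell_\C(t)-\ell_\C(t-)$ your ``same device'' inserts a nearby slice $\C(s'')$ between $\C(t-)$ and $\C(t)$ (with $\ell_\C(t-)-\ell_\C(s'')<\eps$), not $\C(t-)$ between $\C(s)$ and $\C(t)$ --- the latter insertion is what you correctly use for the implication from (h) to (e). With these repairs, the remainder of your proposal (additivity, the bound $\e(\C(s),\C(t))\le\ell_\C(t)-\ell_\C(s)$, nonemptiness propagation, closedness of $\C(t\pm)$, and both equivalence cycles) goes through as written.
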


Thus we can give the following definitions.

\begin{Def}
Assume that $I$ is an interval of $\ar$ such that $t_0 := \inf I \in I$, and that $r \in \clsxint{0,\infty}$. If 
$\C \in \BR_\loc(I;\Conv_r(\H))$ we set 
\[
\cont(\C) := \{t \in I\ :\ \e(\C(t),\C(t+)) = \e(\C(t-),\C(t)) = 0\}
\]
and $\discont(\C) := I \setmeno \cont(\C)$.
Moreover we set 
\[
\BR^\r(I;\Conv_r(\H)) := \{\C \in \BR(I;\Conv_r(\H))\ :\ \e(\C(t),\C(t+)) = 0\ \forall t \in I\},
\]
\[
\BR^\r_\loc(I;\Conv_r(\H)) := \{\C \in \BR_\loc(I;\Conv_r(\H))\ :\ \e(\C(t),\C(t+)) = 0\ \forall t \in I\},
\] 
\[
  \Czero\BR_\loc(I;\Conv_r(\H)) := \{\C \in \BR^r_\loc(I;\Conv_r(\H))\ :\ \cont(\C) = I\}.
\].
\end{Def}

  
\section{The $1$-Lipschitz continuous case}\label{S:Lip case}

Within this section we solve the prox-regular sweeping process driven by a moving set which is $1$-Lipschitz continuous with respect to the excess. 
 
\begin{Thm}\label{Lip theorem}
Assume that $r \in \opint{0,\infty}$ and let $\Conv_r(\H)$ be the family of $r$-prox-regular subsets of $\H$. Assume that $\C : \clsxint{0,\infty} \function \Conv_r(\H)$ is $1$-Lipschitz continuous with respect to the excess $\e$ in the following sense:
\begin{equation}\label{C 1-e-Lip}
  \e(\C(s),\C(t)) \le t-s \qquad \forall t, s \in \clsxint{0,\infty}, s < t.
\end{equation}
If $y_0 \in \C(0) + B_r(0)$ then there exists a unique function $y : \clsxint{0,\infty} \function \H$ such that 
\begin{alignat}{3}
  & y(t) \in \C(t) & \quad & \forall t \in \clsxint{0,\infty}, \label{yinC-1Lip} \\
  & y'(t) \in -N_{\C(t)}(y(t)) & \quad & \text{for $\leb^{1}$-a.e. $t \in \clsxint{0,\infty}$}, \label{diffincl-1Lip} \\
  & y(0) = \P_{\C(0)}(y_{0}). &  \label{incond-1Lip}  
\end{alignat}
Moreover $y \in \Lip(\clsxint{0,\infty};\H)$ and $\Lipcost[y] \le 1$. 
We will denote this unique solution by 
$\Sw(\C,y_0)$, so that if 
\[
  \textsl{D}_{\Lip} := 
  \{(\C,y_0) \in \BR_\loc(\clsxint{0,\infty};\Conv_r(\H)) \times \H\ :\  
  \text{\eqref{C 1-e-Lip} holds},\ y_0 \in \C(0) + B_r(0)\},  
\]
then we can define $\Sw$ $:$ $\textsl{D}_{\Lip} \function \Lip(\clsxint{0,\infty};\H)$, 
the solution operator associating with every pair $(\C,y_0)$ the only $y$ satisfying \eqref{yinC-1Lip}--\eqref{incond-1Lip}.
\end{Thm}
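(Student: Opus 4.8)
The plan is to construct the solution on each compact interval $\clint{0,T}$ by the \emph{catching-up algorithm} and to obtain the global solution by letting $T\to\infty$, uniqueness being established directly by a Gronwall estimate. First I would fix $T>0$, take $n\in\en$ with $T/n<r$, put $t^n_k:=kT/n$, and define $y^n_0:=\P_{\C(0)}(y_0)$ (a well-defined singleton, since $y_0\in\C(0)+B_r(0)$ means $\d_{\C(0)}(y_0)<r$, so Proposition \ref{1proj in seg} applies) and inductively $y^n_{k+1}:=\P_{\C(t^n_{k+1})}(y^n_k)$. Each projection is again a singleton because $y^n_k\in\C(t^n_k)$ and \eqref{C 1-e-Lip} give $\d_{\C(t^n_{k+1})}(y^n_k)\le\e(\C(t^n_k),\C(t^n_{k+1}))\le T/n<r$; the same bound yields $\norm{y^n_{k+1}-y^n_k}{}\le T/n$, so the piecewise affine interpolants $y_n$ are equi-Lipschitz with constant $\le 1$. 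Finally, since $y^n_{k+1}\in\Proj_{\C(t^n_{k+1})}(y^n_k)$, definition \eqref{normal cone} gives the discrete inclusion $y^n_k-y^n_{k+1}\in N_{\C(t^n_{k+1})}(y^n_{k+1})$; equivalently the piecewise constant velocity satisfies $-\dot y_n(t)\in N_{\C(\theta_n(t))}(\overline y_n(t))$ with $\norm{\dot y_n}{}\le 1$, where $\theta_n(t)$ and $\overline y_n(t)$ denote the right endpoint and the right nodal value of the subinterval containing $t$.

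Next I would show that $(y_n)$ converges. Being equi-Lipschitz with constant $\le 1$, the velocities $\dot y_n$ are bounded in $\L^\infty(\clint{0,T};\H)$, so along a subsequence $\dot y_n\convergedeb v$ weakly in $\L^2$ and $y_n\to y$ pointwise-weakly; the real work is to upgrade this to \emph{strong} uniform convergence, and this is exactly where prox-regularity enters. The plan is to compare two indices: setting $w_{n,m}(t):=\norm{y_n(t)-y_m(t)}{}^2$ and differentiating, I would insert the two discrete inclusions and estimate the cross terms by Theorem \ref{charact proxreg}(iii) (applied to the normal vectors $-\dot y_n$, $-\dot y_m$, of norm $\le 1$), while the mismatch between $\C(\theta_n(t))$ and $\C(\theta_m(t))$ is absorbed using \eqref{C 1-e-Lip}. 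This should produce a differential inequality of the type $w_{n,m}'(t)\le (1/r)\,w_{n,m}(t)+C(T/n+T/m)$ with $w_{n,m}(0)=0$, so that Gronwall's lemma yields $\sup_{[0,T]}w_{n,m}\to 0$. Hence $(y_n)$ is Cauchy in $\Czero(\clint{0,T};\H)$ and converges strongly to a limit $y\in\Lip(\clint{0,T};\H)$ with $\Lipcost[y]\le 1$. I expect this strong-convergence estimate, together with the limit passage below, to be the main obstacle: unlike the convex case the normal cone is only hypomonotone, so the quadratic defect $\norm{\cdot}{}^2/2r$ in Theorem \ref{charact proxreg}(iii) must be kept under control by insisting that all iterates stay inside the $r$-tube $\{\d_{\C(\cdot)}<r\}$, which the mesh condition $T/n<r$ guarantees.

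Then I would pass to the limit. For the constraint \eqref{yinC-1Lip}, the nodal values lie in $\C(t^n_k)$, and \eqref{C 1-e-Lip} together with the closedness of $\C(t)$ and the uniform convergence give $y(t)\in\C(t)$ for every $t$. For the inclusion \eqref{diffincl-1Lip} I would first rewrite the discrete relation, via Theorem \ref{charact proxreg}(iii), as the scalar inequalities $\duality{-\dot y_n(t)}{z-\overline y_n(t)}\le (1/2r)\norm{z-\overline y_n(t)}{}^2$ for $z\in\C(\theta_n(t))$; integrating against nonnegative test functions and passing to the limit (using the strong convergence $y_n\to y$, the weak convergence $\dot y_n\convergedeb v=y'$, the Lipschitz continuity of $\C$ to replace $\C(\theta_n(t))$ by $\C(t)$, and weak lower semicontinuity for the quadratic term) yields $\duality{-y'(t)}{z-y(t)}\le (1/2r)\norm{z-y(t)}{}^2$ for all $z\in\C(t)$ and a.e.\ $t$. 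By Proposition \ref{propsigma} (with $\sigma=1/2r$) this means $-y'(t)\in N_{\C(t)}(y(t))$, i.e.\ \eqref{diffincl-1Lip}. The initial condition \eqref{incond-1Lip} holds because $y(0)=y^n_0=\P_{\C(0)}(y_0)$ for every $n$. Gluing the solutions on $\clint{0,T}$ as $T\to\infty$ and invoking uniqueness produces the global solution on $\clsxint{0,\infty}$, which then defines $\Sw(\C,y_0)$.

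It remains to prove uniqueness, which simultaneously shows that every locally absolutely continuous solution coincides with the Lipschitz one. Given two solutions $y_1,y_2$, set $\delta(t):=\norm{y_1(t)-y_2(t)}{}^2$. For a.e.\ $t$ we have $-y_i'(t)\in N_{\C(t)}(y_i(t))$ and $y_j(t)\in\C(t)$, so Theorem \ref{charact proxreg}(iii) gives $\duality{-y_i'(t)}{y_j(t)-y_i(t)}\le (\norm{y_i'(t)}{}/2r)\,\delta(t)$; adding the two inequalities (for $(i,j)=(1,2)$ and $(i,j)=(2,1)$) yields $\delta'(t)\le ((\norm{y_1'(t)}{}+\norm{y_2'(t)}{})/r)\,\delta(t)$ for a.e.\ $t$. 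Since $\delta(0)=0$ and the coefficient lies in $\L^1_\loc$, Gronwall's lemma forces $\delta\equiv 0$, hence $y_1=y_2$. This completes the proposed proof.
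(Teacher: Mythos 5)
Your proposal is correct and takes essentially the same route as the paper: the catching-up algorithm, a hypomonotonicity--Gronwall estimate via Theorem \ref{charact proxreg}(iii) yielding strong uniform convergence of the interpolants on compact intervals, weak $\L^2$ convergence of the derivatives to pass to the limit in the inclusion through Proposition \ref{propsigma}, and Gronwall again for uniqueness. The only inessential deviations are that the paper compares consecutive interpolants on nested dyadic meshes (summing a geometric series) and uses Mazur's lemma pointwise, whereas you compare arbitrary indices $n,m$ on uniform meshes and integrate against test functions; when carrying out your cross-term estimate, remember to anchor the comparison points at \emph{left} grid nodes and project them forward onto $\C(\theta_n(t))$, exactly as the paper does, since \eqref{C 1-e-Lip} controls the excess only from earlier sets onto later ones.
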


\begin{proof}
First of all observe that the uniqueness of solutions can be inferred arguing exactly as in 
\cite[Proposition 3.6]{Thi16}, where only the hypomonotonicity is used. The argument in our situation is actually simpler bacause the functions involved here are absolutely continuous and the standard Gronwall lemma can be exploited.

Concerning existence, let $n_r \in \en$ be such that
\begin{equation}\label{LT1}
  \frac{1}{2^n} < r \qquad \forall n \in \en,\ n \ge n_r,
\end{equation}
and for every $n \in \en$, $n \ge n_r$, let $(t_j^n)_{j \in \en}$ be the sequence defined by
\begin{equation}\label{LT2}
  t_j^n := \frac{j}{2^n}, \qquad j \in \en.
\end{equation}
Since $y_0 \in \C(0) + B_r(0)$, from the $r$-prox-regularity of $\C(0)$ it follows that there exists a unique projection $\P_{\C(0)}(y_0) \in \C(0)$. So now we fix $n \ge n_r$ and we prove by induction that there is a sequence 
$(y_j^n)_{j=1}^\infty$ such that:
\begin{alignat}{3}
   & y_0^n := \P_{\C(0)}(y_0),   
      \label{implicit scheme for Lip-sweeping processes1} \\
   & y_j^n \in \C(t_j^n) & \qquad & \forall j \in\ \en,  
      \label{implicit scheme for Lip-sweeping processes2} \\
   & -\dfrac{y_j^n -y_{j-1}^n}{t_j^n - t_{j-1}^n} \in N_{\C(t_j^n)}(y_j^n) & \qquad & \forall j \in\ \en, 
      \label{implicit scheme for Lip-sweeping processes3}
\end{alignat}
which is an implicit time discretization scheme for \eqref{yinC-1Lip}--\eqref{incond-1Lip} usually called 
``catching-up algorithm". As $y_0^n \in \C(0)$, by virtue of \eqref{C 1-e-Lip} and \eqref{LT1} we have that 
$\d(y_0^n, \C(t_1^n)) \le \e\left(\C(0),\C(t_1^n)\right) \le t_1^n = 1/2^n < r$, therefore there exists 
$y_1^n := \P_{\C(t_1^n)}(y_0^n)$, which implies that $y_0^n-y_1^n \in N_{\C(t_1^n)}(y_1^n)$, or equivalently that 
\eqref{implicit scheme for Lip-sweeping processes3} holds with $j = 1$. Now for any integer $j > 1$ let us assume that $y_{j-1}^n$ satisfies the inclusions $y_{j-1}^n \in \C(t_{j-1}^n)$ and
$-(y_{j-1}^n -y_{j-2}^n)/(t_{j-1}^n - t_{j-2}^n) \in N_{\C(t_{j-1}^n)}(y_{j-1}^n)$. Therefore we have 
\[
  \d(y_{j-1}^n, \C(t_j^n)) \le \e(\C(t_{j-1}^n),\C(t_j^n)) \le t_j^n - t_{j-1}^n = \frac{1}{2^n} < r,
\]
thus there exists $y_j^n := \P_{\C(t_j^n)}(y_{j-1}^n)$, and
$y_{j-1}^n-y_j^n  \in N_{\C(t_j^n)}(y_j^n)$. Thus we have proved that there is a sequence $(y_j^n)_{j=1}^\infty$ such that
\eqref{implicit scheme for Lip-sweeping processes1}--\eqref{implicit scheme for Lip-sweeping processes3} hold, or equivalently such that \eqref{implicit scheme for Lip-sweeping processes1} holds together with
\begin{equation}\label{LT3}
    y_j^n = \P_{\C(t_j^n)}(y_{j-1}^n) \qquad \forall j \in \en.
\end{equation}
Now for every $n \ge n_r$ we define the step function $y_n : \clsxint{0,\infty} \function \H$ by 
\begin{align}
  & y_n(0) := y_0^n, \\
  & y_n(t) := y_{j}^n \qquad \text{if $t \in \cldxint{t_{j-1}^{n}, t_{j}^{n}}$}, \quad j \in \en, 
\end{align}
and let $x_n : \clsxint{0,\infty} \function \H$ be the piecewise affine interpolant of $(y_j^n)_{j}$, i.e.
\begin{equation}
  x_n(t) := y_{j-1}^n + \frac{t-t^n_{j-1}}{t_j^n - t_{j-1}^n}(y_j^n - y_{j-1}^n)
  \qquad \text{if $t \in \clsxint{t_{j-1}^n, t_j^n}$}, \quad j \in \en.
\end{equation}
We have by \eqref{implicit scheme for Lip-sweeping processes3} that 
\begin{equation}\label{x_n' in N}
  -x_n'(t) = \frac{y_{j-1}^n - y_j^n}{t_j^n - t_{j-1}^n} \in N_{C(t_j^n)}(y_j^n) = N_{C(t_j^n)}(y(t))
  \qquad \forall t \in \opint{t_{j-1}^n, t_j^n}, \ \forall j \in \en,
\end{equation}
and by \eqref{LT3} that
\begin{align}
  \norm{x_n'(t)}{} 
  & =   \frac{\norm{y_j^n - y_{j-1}^n}{}}{t_j^n - t_{j-1}^n} 
  =   \frac{\norm{\P_{\C(t_j^n)}(y_{j-1}^n) - y_{j-1}^n}{}}{t_j^n - t_{j-1}^n} \notag \\
  & =   \frac{\d(y_{j-1}^n, \C(t_j^n))}{t_j^n - t_{j-1}^n}
  \le \frac{\e(\C(t_{j-1}^n), \C(t_j^n))}{t_j^n - t_{j-1}^n} \qquad \forall t \in \opint{t_{j-1}^n, t_j^n},
\end{align}
therefore \eqref{C 1-e-Lip} yields
\begin{equation}\label{xn 1-Lip}
  \norm{x_n'(t)}{}  \le 1 \qquad \forall t \neq t_j^n,
\end{equation}
and $x_n$ is $1$-Lipschitz continuous on $\clsxint{0,\infty}$.
Now let $\theta_n : \clsxint{0,\infty} \function \clsxint{0,\infty}$ be defined by 
\begin{equation}\label{tetan}
  \theta_n(0) = 0, \qquad \theta_n(t) := t_{j}^n \quad \text{if $t \in \cldxint{t_{j-1}^n,t_j^n}$}, \ j \in \en. \notag
\end{equation}
so that
\begin{equation}\label{LT4}
  0 \le \theta_n(t) - t \le \frac{1}{2^n},
\end{equation}
\begin{equation}
  x_n(\theta_n(t)) = x_n(t_j^n) = y_j^n \in \C(t_j^n) \qquad \forall t \in \cldxint{t_{j-1}^n,t_j^n},\ \forall j \in \en,
\end{equation}
and, by \eqref{xn 1-Lip} and \eqref{LT4},
\begin{equation}\label{LT-xn-x}
  \norm{x_{n}(\theta_{n}(t)) - x_{n}(t)}{} \le |\theta_n(t) - t| \le 1/2^n.
\end{equation}
Now let us fix $n \ge n_r$ 
and $t \ge 0$ such that $t \neq t_j^n$ and $t \neq t_{j}^{n+1}$ for all $j \in \en \cup \{0\}$. We have that
\begin{align}
  \lduality{x'_{n+1}(t) - x'_n(t)}{x_{n+1}(t) - x_n(t)} 
  & = \lduality{x'_{n+1}(t) - x'_n(t)}{x_{n+1}(t) - x_{n+1}(\theta_{n+1}(t))} \notag \\
     &  \phantom{=\ }  +  \lduality{x'_{n+1}(t) - x'_n(t)}{x_{n+1}(\theta_{n+1}(t)) - x_{n}(\theta_{n}(t))} \notag \\
     &  \phantom{=\ }  +  \lduality{x'_{n+1}(t) - x'_n(t)}{x_{n}(\theta_{n}(t)) - x_n(t)} \notag \\
 & \le \norm{x'_{n+1}(t) - x'_n(t)}{}\norm{x_{n+1}(t) - x_{n+1}(\theta_{n+1}(t))}{} \notag \\
     &  \phantom{=\ }  +  \lduality{x'_{n+1}(t) - x'_n(t)}{x_{n+1}(\theta_{n+1}(t)) - x_{n}(\theta_{n}(t))} \notag \\
     &  \phantom{=\ }  +  \norm{x'_{n+1}(t) - x'_n(t)}{}\norm{x_{n}(\theta_{n}(t)) - x_n(t)}{}, \notag
\end{align}
therefore, since $\norm{x'_{n+1}(t) - x'_n(t)}{} \le 2$ by \eqref{xn 1-Lip}, 
using \eqref{LT-xn-x} 
we infer that 
\begin{align}\label{d/dt <}
 \frac{\de}{\de t}\norm{x_{n+1}(t) - x_n(t)}{}^2 
 & = 2\duality{x'_{n+1}(t) - x'_n(t)}{x_{n+1}(t) - x_n(t)} \notag \\
 & \le 2\lduality{x'_{n+1}(t) - x'_n(t)}{x_{n+1}(\theta_{n+1}(t)) - x_{n}(\theta_{n}(t))} + \frac{6}{2^{n}}.
\end{align}
In order to estimate properly the right-hand side of \eqref{d/dt <}, we
first observe that from \eqref{LT-xn-x} we obtain 
\begin{align}
  & \norm{x_{n+1}(\theta_{n+1}(t)) - x_{n}(\theta_{n}(t))}{}^2 \notag \\
  & \le  4\norm{x_{n+1}(\theta_{n+1}(t)) - x_{n+1}(t)}{}^2 +
   4\norm{x_{n+1}(t) - x_{n}(t)}{}^2 +
    2\norm{x_{n}(t) - x_{n}(\theta_{n}(t))}{}^2 \notag \\
  & \le  \frac{4}{2^{2n+2}} +
   4\norm{x_{n+1}(t) - x_{n}(t)}{}^2 +
    \frac{2}{2^{2n}} = \frac{4}{2^{2n}} + 4\norm{x_{n+1}(t) - x_{n}(t)}{}^2. \label{stima freq} 
\end{align}
Then we take $j \in \en$ such that $t \in \opint{t_{j-1}^n, t_j^n}$ and observe that
\[
  t_{j-1}^n = t_{2j-2}^{n+1} <  t_{2j-1}^{n+1} < t_{2j}^{n+1} = t_j^n,
\]
thus $\theta_{n+1}(t) \in \{t_{2j-1}^{n+1}, t_{2j}^{n+1}\}$ and 
$x_n(t_{j-1}^n) = y_{j-1}^{n} \in \C(t_{j-1}^{n}) = \C(t_{2j-2}^{n+1})$, therefore thanks to \eqref{C 1-e-Lip} and 
\eqref{tetan},
\begin{align}
 \d(x_n(t_{j-1}^n),\C(\theta_{n+1}(t))) 
 & \le \e(\C(t_{2j-2}^{n+1}), \C(\theta_{n+1}(t))) \notag \\
 & \le \theta_{n+1}(t) - t_{2j-2}^{n+1} \le t_{2j}^{n+1} - t_{2j-2}^{n+1} = 1/2^{n}. \label{LT6}
\end{align}
Therefore by \eqref{LT1} there exists $\P_{\C(\theta_{n+1}(t))}(x_n(t_{j-1}^n))$, and  
as by \eqref{x_n' in N} we have $-x'_{n+1}(t)$ $\in$ $N_{\C(\theta_{n+1}(t))}(x_{n+1}(\theta_{n+1}(t)))$, using 
the $r$-prox-regularity of $\C(\theta_{n+1}(t))$, we can write
\begin{align}
  & \lduality{x'_{n+1}(t)}{x_{n+1}(\theta_{n+1}(t)) - x_{n}(\theta_{n}(t))} \notag \\
  & =  \lduality{x'_{n+1}(t)}{x_{n+1}(\theta_{n+1}(t)) - \P_{\C(\theta_{n+1}(t))}(x_n(t_{j-1}^n))}  \notag \\
  & \phantom{=\ } + 
          \lduality{x'_{n+1}(t)}{\P_{\C(\theta_{n+1}(t))}(x_n(t_{j-1}^n)) - x_{n}(\theta_{n}(t))} \notag \\
  & \le \frac{\lnorm{x_{n+1}(\theta_{n+1}(t)) - \P_{\C(\theta_{n+1}(t))}(x_n(t_{j-1}^n))}{}^2}{2r} +
           \lnorm{\P_{\C(\theta_{n+1}(t))}(x_n(t_{j-1}^n))) - x_{n}(\theta_{n}(t))}{}. \label{LT5}
\end{align}
Now observe that from \eqref{xn 1-Lip}, \eqref{tetan}, \eqref{LT6}, and \eqref{stima freq}, we infer that
\begin{align}
  & \lnorm{x_{n+1}(\theta_{n+1}(t)) - \P_{\C(\theta_{n+1}(t))}(x_n(t_{j-1}^n))}{}^2 \notag \\
 & \le 2\lnorm{x_{n+1}(\theta_{n+1}(t)) - x_n(\theta_n(t))}{}^2 \notag \\
 & \phantom{\le \ } +         
  4\lnorm{x_n(\theta_n(t)) - x_n(t_{j-1}^n)}{}^2  +
           4\lnorm{x_n(t_{j-1}^n) - \P_{\C(\theta_{n+1}(t))}(x_n(t_{j-1}^n))}{}^2 \notag \\
   & \le 2\lnorm{x_{n+1}(\theta_{n+1}(t)) - x_n(\theta_n(t))}{}^2  +
          4|\theta_n(t) - t_{j-1}^n|^2  +
           4\d(x_n(t_{j-1}^n), \C(\theta_{n+1}(t)))^2\notag \\
   & \le \frac{8}{2^{2n}} + 8\norm{x_{n+1}(t) - x_{n}(t)}{}^2 +
          \frac{4}{2^{2n}}  +
           \frac{4}{2^{2n}}, \label{LT7}
\end{align}
and that, by \eqref{xn 1-Lip}, \eqref{tetan} and \eqref{LT6},
\begin{align}
&  \lnorm{\P_{\C(\theta_{n+1}(t))}(x_n(t_{j-1}^n))) - x_{n}(\theta_{n}(t))}{} \notag \\
 & \le \lnorm{\P_{\C(\theta_{n+1}(t))}(x_n(t_{j-1}^n))) - x_n(t_{j-1}^n)}{} +
                   \lnorm{x_n(t_{j-1}^n) - x_{n}(\theta_{n}(t))}{}  \notag \\
     & \le   \d(x_n(t_{j-1}^n), \C(\theta_{n+1}(t))) +
                   |t_{j-1}^n - \theta_{n}(t)|  \le 
                \frac{1}{2^{n}}  +  \frac{1}{2^n}, \label{LT8}
 \end{align}
therefore, from \eqref{LT5}, \eqref{LT7}, and \eqref{LT8} we obtain that there exists a constant $K_1 > 0$, independent of $n$, such that
\begin{equation}
  \lduality{x'_{n+1}(t)}{x_{n+1}(\theta_{n+1}(t)) - x_{n}(\theta_{n}(t))} \le
  \frac{4}{r}\lnorm{x_{n+1}(t) - x_n(t)}{}^2  +
          \frac{K_1}{2^{n}}. \label{LT9}
\end{equation}
Now observe that $\theta_n(t) = t_j^n = t_{2j}^{n+1}$ and 
$x_{n+1}(t_{2j-2}^{n+1}) = y_{2j-2}^{n+1} \in \C(t_{2j-2}^{n+1})$, thus, thanks to \eqref{C 1-e-Lip} and \eqref{tetan},
\begin{align}
 \d(x_{n+1}(t_{2j-2}^{n+1}),\C(\theta_{n}(t))) 
 & \le \e(\C(t_{2j-2}^{n+1}), \C(\theta_{n}(t))) \notag \\
 & \le \theta_{n}(t) - t_{2j-2}^{n+1} = t_{2j}^{n+1} - t_{2j-2}^{n+1} = 1/2^{n}. \label{LT10}
\end{align}
Therefore by \eqref{LT1} there exists 
$\P_{\C(\theta(t))}(x_{n+1}(t_{2j-2}^{n+1}))$ and as $-x'_{n}(t) \in N_{\C(\theta_{n}(t))}(x_{n}(\theta_{n}(t)))$ by 
 \eqref{x_n' in N}, and using the $r$-prox-regularity of $\C(\theta_{n}(t))$, we can write
\begin{align}
  & \lduality{- x'_n(t)}{x_{n+1}(\theta_{n+1}(t)) - x_{n}(\theta_{n}(t))} \notag \\
  & =  \lduality{- x'_n(t)}{x_{n+1}(\theta_{n+1}(t)) - \P_{\C(\theta_n(t))}(x_{n+1}(t_{2j-2}^{n+1}))} \notag \\
   & \phantom{=\ } + 
         \lduality{- x'_n(t)}{\P_{\C(\theta_n(t))}(x_{n+1}(t_{2j-2}^{n+1})) - x_{n}(\theta_{n}(t))} \notag \\
  & \le \lnorm{x_{n+1}(\theta_{n+1}(t)) - \P_{\C(\theta_n(t))}(x_{n+1}(t_{2j-2}^{n+1}))}{} + 
           \frac{\lnorm{\P_{\C(\theta_n(t))}(x_{n+1}(t_{2j-2}^{n+1})) - x_{n}(\theta_{n}(t))}{}^2}{2r}. \label{LT11}
\end{align}
Now observe that, by \eqref{xn 1-Lip}, \eqref{tetan} and \eqref{LT10}, we have 
\begin{align}
&  \lnorm{x_{n+1}(\theta_{n+1}(t)) - \P_{\C(\theta_n(t))}(x_{n+1}(t_{2j-2}^{n+1}))}{} \notag \\
 & \le \lnorm{x_{n+1}(\theta_{n+1}(t)) - x_{n+1}(t_{2j-2}^{n+1})}{} +
                   \lnorm{x_{n+1}(t_{2j-2}^{n+1}) - \P_{\C(\theta_n(t))}(x_{n+1}(t_{2j-2}^{n+1}))}{}  \notag \\
 & \le    |\theta_{n+1}(t) - t_{2j-2}^{n+1}|  + \d(x_{n+1}(t_{2j-2}^{n+1}), \C(\theta_{n}(t))) 
                   \le 
                \frac{1}{2^{n}}  +  \frac{1}{2^n} \label{LT12}
 \end{align}
and that, from \eqref{xn 1-Lip}, \eqref{tetan}, \eqref{LT10}, and \eqref{stima freq}, we infer that
\begin{align}
  & \lnorm{\P_{\C(\theta_n(t))}(x_{n+1}(t_{2j-2}^{n+1})) - x_{n}(\theta_{n}(t))}{}^2 \notag \\
 & \le 4\lnorm{\P_{\C(\theta_n(t))}(x_{n+1}(t_{2j-2}^{n+1})) - x_{n+1}(t_{2j-2}^{n+1})}{}^2 
     \notag \\
 & \phantom{\le \ }  + 4\lnorm{x_{n+1}(\theta_{n+1}(t)) - x_{n+1}(t_{2j-2}^{n+1})}{}^2 +    
   2\lnorm{x_{n+1}(\theta_{n+1}(t)) - x_n(\theta_n(t))}{}^2
            \notag \\
   & \le 4\d(x_{n+1}(t_{2j-2}^{n+1}), \C(\theta_n(t)))^2 + 4|\theta_{n+1}(t) - t_{2j-2}^{n+1}|^2  +
   2\lnorm{x_{n+1}(\theta_{n+1}(t)) - x_n(\theta_n(t))}{}^2
          \notag \\
   & \le \frac{4}{2^{2n}} + \frac{4}{2^{2n}} + \frac{8}{2^{2n}}  + 8\norm{x_{n+1}(t) - x_{n}(t)}{}^2
           \label{LT13}
\end{align}
therefore, from \eqref{LT11}, \eqref{LT12}, and \eqref{LT13} we obtain that there exists a constant $K_2 > 0$, independent of $n$, such that
\begin{equation}
  \lduality{-x'_{n}(t)}{x_{n+1}(\theta_{n+1}(t)) - x_{n}(\theta_{n}(t))} \le
  \frac{4}{r}\lnorm{x_{n+1}(t) - x_n(t)}{}^2  +
          \frac{K_2}{2^{n}}. \label{LT14}
\end{equation}
Collecting together \eqref{d/dt <}, \eqref{LT9} and \eqref{LT14} we infer that there exists a constant $K_3 > 0$, independent of $n$ (depending only on $r$), such that
\begin{equation}
  \frac{\de}{\de t}\norm{x_{n+1}(t) - x_n(t)}{}^2 
 \le \frac{K_3}{2^n} + \frac{8}{r}\norm{x_{n+1}(t) - x_{n}(t)}{}^2. \label{LT15}
\end{equation}
By \eqref{LT15} and by the Gronwall lemma we infer that for every $T > 0$ there is a constant $K(T,r) > 0$ independent of $n$ (depending only on $r$ and $T$), such that
\[
\norm{x_{n+1}(t) - x_n(t)}{}^2 \le \frac{K(T,r)}{2^n} \qquad \forall t \in \clint{0,T}, \quad \forall n \ge n_r,
\]
so that
\[
  \sum_{n=n_r}^\infty \sup_{t \in \clint{0,T}}\norm{x_{n+1}(t) - x_n(t)}{} < \infty 
\]
and the sequence $x_n$ is uniformly Cauchy in $C(\clint{0,T};\H)$ for every $T > 0$. It follows that there exists a 
continuous function
$x : \clsxint{0,\infty} \function \H$ such that 
\begin{equation}
  x_n \to x \quad \text{uniformly on $\clint{0,T}, \qquad \forall T > 0$}.
\end{equation}
Since
\[
  \norm{y_n(t) - x_n(t)}{} =
   \bigg\|\frac{ y_j^n - y_{j-1}^n }{ t_j^n - t_{j-1}^n }\bigg\|  |t_j^n - t|
   \le  |t_j^n - t|
  \qquad \forall t \in \cldxint{t_{j-1}^n, t_j^n}
\]
we also deduce that 
\begin{equation}\label{yn>x}
  y_n \to x \quad \text{uniformly on $\clint{0,T}, \qquad \forall T > 0$}.
\end{equation}
If $t > 0$ and $t \in \cldxint{t_{j-1}^n, t_j^n}$, then we have
\begin{align}
  \d(y_n(t),\C(t)) & = \d(y_j^n, \C(t)) \notag\\
  &  \le \norm{y_j^n - y_{j-1}^n}{} + \d(y_{j-1}^n,\C(t)) \notag \\
  & \le   \d(y_{j-1}^n,\C(t_j^n)) + \e(\C(t_{j-1}^n),\C(t)) \\
  & \le \e(\C(t_{j-1}^n),\C(t_j^n)) + \e(\C(t_{j-1}^n),\C(t)) \le 2/2^n,
\end{align}
therefore by \eqref{yn>x} and by the closedness of $\C(t)$ we infer that 
\begin{equation}
  x(t) \in \C(t) \qquad \forall t \ge 0.
\end{equation}
Moreover from \eqref{xn 1-Lip} we also infer that $x \in \Lip(\clint{0,T};\H)$ and, at least for a subsequence which we do not relabel,
\begin{equation}
  x_n' \to x' \quad \text{weakly in $\L^2(\clint{0,T};\H)$} \qquad \forall T > 0.
\end{equation}
Now we can follow the argument of \cite[pp. 158-159]{ColThi10}.
By the Mazur's lemma there exists a sequence $v_n \in L^2(\clint{0,T};\H)$ such that
\begin{equation}
  v_n = \sum_{k = n}^{N(n)} \lambda_{j}^{(n)}x_j', \qquad \text{with $\sum_{k = n}^{N(n)} \lambda_{j}^{(n)} = 1$},
  \ \lambda_{j}^{(n)} \ge 0,
\end{equation}
and
\begin{equation}
  v_n \to x' \qquad \text{in $\L^2(\clint{0,T};\H)$}.
\end{equation}
At least for a further subsequence which we do not relabel we have that there exists $S \subseteq \clint{0,T}$
such that $\leb^1(\clint{0,T} \setmeno S) = 0$ and 
\begin{equation}
  v_n(t) \to x'(t) \quad \text{in $\H$,} \quad \forall t \in S.
\end{equation}
Therefore if $t \in S$ and if $z_t \in \C(t)$ are fixed arbitrarily, for every $k$ we have that
\begin{align}
   \duality{-x_k'(t)}{z_t - x(t)} 
   & = \duality{-x_k'(t)}{z_t - y_{k}(t)} + \duality{x_k'(t)}{y_k(t) - x(t)} \notag \\
   & \le \frac{\norm{x_k'(t)}{}}{2r}\norm{z_t - y_k(t)}{}^2 + \norm{y_k(t) - x(t)}{} \notag \\
   & \le \frac{1}{r}(\norm{z_t - x(t)}{}^2 + \norm{x(t)-y_k(t)}{}^2) + \norm{y_k(t) - x(t)}{}, \notag
\end{align}
hence if 
\[
  \eps_k := 
  \sup_{s \in \clint{0,T}} \bigg(\frac{1}{r}\norm{x(s)-y_k(s)}{}^2 + \norm{y_k(s) - x(s)}{}\bigg),
\]
we have that $\eps_k \to 0$ as $k \to \infty$ and 
\begin{equation}
\duality{-x_k'(t)}{z_t - x(t)} \le \frac{1}{r}\norm{z_t - x(t)}{}^2 + \eps_k \qquad \forall k \ge n_r.
\end{equation}
On the other hand 
\begin{align}
  \duality{-x'(t)}{z_t - x(t)} 
  & = \lim_{n \to \infty} \duality{-v_n(t)}{z_t - x(t)} \notag \\
  & = \lim_{n \to \infty} \sum_{k = n}^{N(n)} \lambda_{j}^{(n)}\duality{-x_k'(t)}{z_t - x(t)}, \notag 
\end{align}
hence we have
\begin{align}
  \duality{-x'(t)}{z_t - x(t)} 
  & \le \limsup_{n \to \infty} 
    \sum_{k = n}^{N(n)}\lambda_{j}^{(n)} \left(\frac{1}{r}\norm{z_t - x(t)}{}^2 + \eps_k\right) \notag \\
  & = \frac{1}{r}\norm{z_t - x(t)}{}^2 + \limsup_{n \to \infty} \sum_{k = n}^{N(n)}\lambda_{j}^{(n)} \eps_k \notag \\
  & \le \frac{1}{r}\norm{z_t - x(t)}{}^2 + \limsup_{n \to \infty} \sup_{k \ge n}\eps_k, \notag
\end{align}
but 
$\limsup_{n \to \infty} \sup_{k \ge n}\eps_k = 0$, therefore we have that
\[
 \duality{x'(t)}{z_t - x(t)} \le \frac{1}{r}\norm{z_t - x(t)}{}^2 \qquad \forall t \in S, \ \forall z_t \in \C(t) 
\]
and \eqref{diffincl-1Lip} is proved by virtue of Proposition \ref{propsigma}.
\end{proof}


\section{Prox-regular sweeping processes with bounded retraction}\label{proofs}

Let us start with a technical lemma. Let us remark that parts (ii)--(iv) of this lemma were proved in 
\cite[Lemmas 15.45, 15.47, 15.55]{Thi23b}, as explained in the proof, which we keep here for the sake of completeness. 

\begin{Lem}\label{L:lemma tecnico-2}
Assume that \eqref{H-prel} holds, $0 < \rho < r$, and that $\K$ is an $r$-prox-regular subset of $\H$. If 
$\K_\rho := \K + \overline{B}_\rho(0)$ then the following statements hold true.
\begin{itemize}
\item[(i)]
  $\K_\rho = \{x \in \H \ :\ \norm{x - \P_\K(x)}{} \le \rho\}$.
\item[(ii)]
 $\partial \K_\rho = \{x \in \H \ :\ \norm{x - \P_\K(x)}{} = \rho\}$.
\item[(iii)]
$\K_\rho$ is $(r-\rho)$-prox-regular.
\item[(iv)]
If $y \in \partial \K_\rho$ then $N_{\K_\rho}(y) = \left\{\lambda(y - \P_\K(y))\ :\ \lambda \ge 0\right\}$.
\end{itemize}
\end{Lem}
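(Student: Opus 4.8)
The plan is to establish the four statements in the order (i), (ii), (iii), (iv), since each one feeds naturally into the next. Throughout, the main tool is the projection map $\P_\K$, which is well-defined and single-valued on $\{x : \d_\K(x) < r\}$ by Proposition \ref{1proj in seg}, together with the characterization of the normal cone via the prox-regularity inequality of Theorem \ref{charact proxreg}(iii).

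For part (i), I would argue by double inclusion. If $x \in \K_\rho = \K + \overline{B}_\rho(0)$, then $x = k + b$ with $k \in \K$ and $\norm{b}{} \le \rho < r$, so $\d_\K(x) \le \norm{x - k}{} = \norm{b}{} \le \rho < r$; since $\d_\K(x) < r$, the projection $\P_\K(x)$ is the genuine nearest point, hence $\norm{x - \P_\K(x)}{} = \d_\K(x) \le \rho$. Conversely, if $\norm{x - \P_\K(x)}{} \le \rho$ then $x = \P_\K(x) + (x - \P_\K(x))$ exhibits $x$ as a point of $\K$ plus a vector of norm $\le \rho$, so $x \in \K_\rho$. (One must be mildly careful that $\P_\K(x)$ is defined, i.e.\ that $\d_\K(x) < r$; but on the set $\{\norm{x-\P_\K(x)}{}\le\rho\}$ one may phrase the statement so that this is automatic, or simply note $\d_\K(x) \le \rho < r$.)

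For part (ii), the inclusion $\partial \K_\rho \subseteq \{x : \norm{x-\P_\K(x)}{} = \rho\}$ and its reverse both rest on continuity of $x \mapsto \d_\K(x) = \norm{x - \P_\K(x)}{}$ on $\{\d_\K < r\}$, which holds because $\d_\K$ is even differentiable there by Theorem \ref{charact proxreg}(ii). Writing $\K_\rho = \{\d_\K \le \rho\}$ from part (i), the boundary of a sublevel set of a continuous function is contained in the corresponding level set; for the reverse inclusion I would show that any $x$ with $\d_\K(x) = \rho$ is a limit both of points with $\d_\K < \rho$ (move slightly toward $\P_\K(x)$ along the segment) and of points with $\d_\K > \rho$ (move slightly away along the ray from $\P_\K(x)$ through $x$), using Proposition \ref{1proj in seg} to control the projection along these segments.

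For part (iii), I would verify the prox-regularity inequality of Theorem \ref{charact proxreg}(iii) for $\K_\rho$ with constant $r - \rho$, which requires understanding $N_{\K_\rho}$; this is where part (iv) becomes essential, so in practice I would prove (iv) first and then (iii). For (iv), given $y \in \partial \K_\rho$, the inclusion $\{\lambda(y - \P_\K(y)) : \lambda \ge 0\} \subseteq N_{\K_\rho}(y)$ should follow by checking the proximal normal inequality of Proposition \ref{propsigma} directly, exploiting that $\P_\K(y)$ realizes the distance and the prox-regularity of $\K$ itself; the reverse inclusion uses that $\K_\rho$ has nonempty interior near $y$ with an interior ball tangent at $y$, forcing the normal cone to be the single ray. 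I expect the main obstacle to be part (iv), and specifically the reverse inclusion $N_{\K_\rho}(y) \subseteq \{\lambda(y-\P_\K(y)):\lambda\ge 0\}$: one must rule out any other normal direction, which amounts to producing, for each unit vector not parallel to $y - \P_\K(y)$, a nearby point of $\K_\rho$ violating the proximal normal inequality. Concretely I would use that the outward ray from $\P_\K(y)$ through $y$ can be extended to a point $\widetilde y$ with $\P_\K(\widetilde y) = \P_\K(y)$ and $\d_\K(\widetilde y) < r$ (again by Proposition \ref{1proj in seg} run in the outward direction), so that $y = \P_{\K_\rho}(\widetilde y)$ and the normal cone is pinned down by the single external proximal point $\widetilde y$. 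Once (iv) is in hand, (iii) reduces to inserting $n = \lambda(y - \P_\K(y))$ into the inequality $\duality{n}{z - y} \le \frac{\norm{n}{}}{2(r-\rho)}\norm{z-y}{}^2$ for $z \in \K_\rho$ and verifying it by comparing with the corresponding inequality for $\K$ at the point $\P_\K(y)$, tracking how the distance $\rho$ shifts the constant from $r$ to $r - \rho$.
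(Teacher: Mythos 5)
Your overall architecture is sound but genuinely different from the paper's, so let me first compare. Parts (i)--(ii) coincide in substance with the paper: (i) is the same double inclusion, and for (ii) the paper simply invokes the continuity of $\d_\K$ (citing a lemma of Thibault's book), while you supply the reverse inclusion explicitly by moving inward along the segment and outward along the ray; the outward move is legitimate because, by the very definition of $r$-prox-regularity together with Proposition \ref{1proj in seg}, $\d_\K$ equals the arclength parameter along the ray from $\P_\K(x)$ through $x$ up to distance $r$. The real divergence is in (iii)--(iv). The paper proves (iii) \emph{before} and independently of (iv), via the identity $\d_{\K_\rho}(y)=\d_\K(y)-\rho$ for $y\notin\K_\rho$ (quoted from Bounkhel--Thibault) and the differentiability criterion (ii) of Theorem \ref{charact proxreg}; it then gets (iv) from $\nabla\d_\K(y)=(y-\P_\K(y))/\norm{y-\P_\K(y)}{}$ together with cited theorems of Clarke--Stern--Wolenski. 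You reverse the order, prove (iv) by elementary geometry, and deduce (iii) from the normal-cone criterion (iii) of Theorem \ref{charact proxreg}. This is more self-contained, which is a real merit, but two steps are stated loosely and one of them, read literally, would fail.

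First, in (iv) your tools are attached to the wrong inclusions. The exterior point $\widetilde y:=\P_\K(y)+\frac{r}{\rho}(y-\P_\K(y))$ proves the \emph{forward} inclusion, not the reverse: by the definition of prox-regularity, $\d_\K(\widetilde y)=\norm{\widetilde y-\P_\K(y)}{}=r$, so every $z\in\K_\rho$ satisfies $\norm{\widetilde y-z}{}\ge r-\rho=\norm{\widetilde y-y}{}$, whence $y\in\Proj_{\K_\rho}(\widetilde y)$ and $\widetilde y-y=\frac{r-\rho}{\rho}(y-\P_\K(y))$ is a proximal normal; your ``direct check of Proposition \ref{propsigma} using the inequality for $\K$'' does not close on its own. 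The \emph{reverse} inclusion is exactly your interior-ball remark made precise: $\overline{B}_\rho(\P_\K(y))\subseteq\K_\rho$ touches the boundary at $y$, and since the proximal normal cone is antitone under set inclusion (immediate from Proposition \ref{propsigma}), $N_{\K_\rho}(y)\subseteq N_{\overline{B}_\rho(\P_\K(y))}(y)=\{\lambda(y-\P_\K(y)):\lambda\ge0\}$. Second, and more seriously, your final step for (iii) --- ``comparing with the corresponding inequality for $\K$ at $\P_\K(y)$'' --- cannot produce the constant $r-\rho$: writing $z=q+w$ with $q=\P_\K(z)$, $\norm{w}{}\le\rho$, the inequality for $\K$ bounds $\duality{y-\P_\K(y)}{z-y}$ by $\frac{\rho}{2r}\norm{q-\P_\K(y)}{}^2$, and converting $\norm{q-\P_\K(y)}{}$ into $\norm{z-y}{}$ costs the Lipschitz constant $\frac{r}{r-\rho}$ of $\P_\K$ on $\K_\rho$, yielding the modulus $\frac{\rho r}{2(r-\rho)^2}$, strictly worse than the required $\frac{\rho}{2(r-\rho)}$. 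The correct completion is, once again, the exterior point: from $y\in\Proj_{\K_\rho}(\widetilde y)$ and characterization \eqref{ch-pr} one gets $\duality{\widetilde y-y}{z-y}\le\frac12\norm{z-y}{}^2$ for all $z\in\K_\rho$, i.e. $\duality{y-\P_\K(y)}{z-y}\le\frac{\rho}{2(r-\rho)}\norm{z-y}{}^2$, which combined with (iv) (and the triviality of the proximal cone at interior points) is precisely criterion (iii) of Theorem \ref{charact proxreg} with constant $r-\rho$. So your plan is completable, but only because the $\widetilde y$ construction you introduced for (iv) is also the engine of (iii); the comparison-at-$\P_\K(y)$ route you actually describe there is a dead end.
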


\begin{proof}
Concerning statement (i), if $x \in \K_\rho$ then $x = z + u$ for some $z \in \K$ and some 
$u \in \overline{B}_\rho(0)$, therefore 
$\norm{x - \P_\K(x)}{} = \d_\K(x) = \inf\{\norm{w-z-u}{}\ :\ w \in \K\} \le \norm{u}{} \le \rho$. Vice versa if $x \in \H$ and $\norm{x - \P_\K(x)}{} \le \rho$ then $x = \P_\K(x) + (x - \P_\K(x)) \in \K_\rho$.
Hence statement (ii) follows from the continuity of the mapping $x \longmapsto \norm{x - \P_\K(x)}{} = \d_\K(x)$ from $\K + B_{r}(0)$ into $\mathbb{R}$ (but it is also equivalent to the second statement in 
\cite[Lemma 15.55]{Thi23b}). Statement (iii) is proved in \cite[Lemma 15.45-(a)]{Thi23b}, for the convenience of the reader we report here the argument: thanks to \cite[Lemma 3.1]{BouThi02} we have that 
\begin{equation}
  \d_{\K_\rho}(y) = \d_\K(y) - \rho \qquad \forall  y \in \H \setmeno \K_\rho.
\end{equation} 
Therefore from the differentiability of $\d_\K$ on $\K_r \setmeno \K$ it follows that $\d_{\K_\rho}$ is differentiable in $(\K_\rho + B_{r-\rho}(0) )\setmeno \K_\rho$, and this implies statement (iii) by Thereom \ref{charact proxreg}.
We finally prove (iv). Let us observe that by \cite[Theorem 4.1]{PolRocThi00} we have that $\d_K$ is differentiable at $y$ and $\nabla\d_\K(y) = (y-\P_\K(y))/\norm{y-\P_\K(y)}{}$, therefore thanks to 
\cite[Therem 3.1-(2) and Theorem 3.4]{ClaSteWol95}
we infer that $N_{\K_\rho}(y) = \{\lambda\nabla\d_\K(y)\ :\ \lambda \ge 0\} = 
\{\lambda(y - \P_\K(y))\ :\ \lambda \ge 0\}$. 
Part (iv) can be also deduced from \cite[Lemma 15.47-(b)]{Thi23b}. 
\end{proof}

Now we define a particular multivalued curve which will be used to fill in the jumps of a discontinuous moving set 
with bounded retraction.

 \begin{Def}\label{D:F}
Assume that $\Z_0, \Z_1 \in \Conv_0(\H)$ and that $\rho := \e(\Z_0,\Z_1) < \infty$. 
We define the curve $\G_{(\Z_0,\Z_1)} : \clint{0,1} \function \Conv_0(\H)$ by 
\begin{equation}\label{catching-up geodesic-2}
  \G_{(\Z_0,\Z_1)}(t) := 
  \begin{cases}
    \Z_0 & \text{if $t=0$} \\
    \Z_1 + \overline{B}_{(1-t)\rho}(0)  & \text{if $0 < t \le 1$}
  \end{cases}
\end{equation}
\end{Def}

The curve defined in the previous definition is $1$-Lipschitz continuous with respect to the excess $\e$, indeed we can prove the following result.

\begin{Prop}
If $\Z_0, \Z_1 \in \Conv_0(\H)$ and $\G_{(\Z_0,\Z_1)} : \clint{0,1} \function \Conv_0(\H)$ is defined as in Definition \ref{D:F}, we have 
\begin{equation}\label{F geodesic}
  \e(\G(s),\G(t)) = (t-s)\e(\Z_0,\Z_1) \qquad \forall s, t \in \clint{0,1},\ s < t,
\end{equation}
and we can call $\G$ a \emph{$\e\ \!$-geodesic connecting $\Z_0$ to $\Z_1$}.
\end{Prop}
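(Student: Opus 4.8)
The plan is to set $\rho := \e(\Z_0,\Z_1)$, abbreviate $\G := \G_{(\Z_0,\Z_1)}$, and reduce the whole computation to one elementary identity. The case $\rho = 0$ is immediate, since then $\Z_0 \subseteq \overline{\Z_1} = \Z_1$ and both sides of \eqref{F geodesic} vanish, so I assume $\rho > 0$ and put $a := (1-s)\rho$, $b := (1-t)\rho$, so that $a > b \ge 0$. Everything will rest on the following: for every nonempty closed $\W \subseteq \H$, every $c \ge 0$, and every $x \in \H$,
\begin{equation}\label{erosion-plan}
  \d\big(x,\W + \overline{B}_c(0)\big) = \max\{0,\ \d(x,\W) - c\}.
\end{equation}
The inequality ``$\ge$'' follows from the triangle inequality, because every $y \in \W + \overline{B}_c(0)$ satisfies $\d(y,\W) \le c$, hence $\norm{x-y}{} \ge \d(x,\W) - \d(y,\W) \ge \d(x,\W) - c$; the inequality ``$\le$'' follows by taking, for each $p \in \W$, the radial point $p + \min\{c,\norm{x-p}{}\}(x-p)/\norm{x-p}{} \in \W + \overline{B}_c(0)$ and minimising over $p \in \W$. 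Once \eqref{erosion-plan} is available, I would reduce $\e(\G(s),\G(t)) = \sup_{x\in\G(s)}\d(x,\G(t))$ to computing a supremum of $\d(\cdot,\Z_1)$ over $\G(s)$, treating the cases $s = 0$ and $0 < s$ separately.

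If $s = 0$, then $\G(s) = \Z_0$ and $\G(t) = \Z_1 + \overline{B}_b(0)$, so \eqref{erosion-plan} gives
\[
  \e(\Z_0,\G(t)) = \sup_{x\in\Z_0}\max\{0,\ \d(x,\Z_1) - b\}
  = \max\Big\{0,\ \sup_{x\in\Z_0}\d(x,\Z_1) - b\Big\},
\]
the last equality using that $c \mapsto \max\{0,c-b\}$ is nondecreasing and continuous. Since $\sup_{x\in\Z_0}\d(x,\Z_1) = \e(\Z_0,\Z_1) = \rho$ by the very definition of the excess, this equals $\max\{0,\rho - b\} = t\rho = (t-s)\rho$, as desired.

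If $0 < s < t$, then $\G(s) = \Z_1 + \overline{B}_a(0)$ and $\G(t) = \Z_1 + \overline{B}_b(0)$, and exactly as above \eqref{erosion-plan} reduces the claim to $\sup_{x\in \Z_1 + \overline{B}_a(0)}\d(x,\Z_1) = a$, for then $\e(\G(s),\G(t)) = \max\{0,a-b\} = a - b = (t-s)\rho$. The bound ``$\le a$'' is trivial, since $x = z+u$ with $z\in\Z_1$, $\norm{u}{}\le a$ forces $\d(x,\Z_1) \le \norm{u}{} \le a$. The reverse bound is the one genuinely delicate point, because $\Z_1$ is merely closed, so no projection onto it is at hand, and I must manufacture a point of $\Z_1 + \overline{B}_a(0)$ whose distance to $\Z_1$ is nearly $a$. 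I would obtain it by rescaling a near-maximiser of $\d(\cdot,\Z_1)$ coming from $\Z_0$: given $\eps \in (0,s\rho)$, choose $w \in \Z_0$ with $\d(w,\Z_1) > \rho - \eps$ and $p \in \Z_1$ with $\norm{w-p}{} < \d(w,\Z_1) + \eps$, so that $\norm{w-p}{} > \rho - \eps > a$, and set $x := p + (a/\norm{w-p}{})(w-p)$. Then $\norm{x-p}{} = a$ gives $x \in \Z_1 + \overline{B}_a(0)$, while $\norm{w-x}{} = \norm{w-p}{} - a < \d(w,\Z_1) + \eps - a$ gives $\d(x,\Z_1) \ge \d(w,\Z_1) - \norm{w-x}{} > a - \eps$. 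Letting $\eps \to 0$ yields $\sup_{x\in\Z_1+\overline{B}_a(0)}\d(x,\Z_1) = a$, and collecting the two cases proves \eqref{F geodesic}.
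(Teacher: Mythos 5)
Your proof is correct, and it takes a genuinely different route from the paper's. The paper never touches individual points: for the upper bound it uses the Minkowski additivity of closed balls, $\overline{B}_{(1-s)\rho}(0) = \overline{B}_{(1-t)\rho}(0) + \overline{B}_{(t-s)\rho}(0)$, to get $\G(s) = \G(t) + \overline{B}_{(t-s)\rho}(0)$ for $0 < s \le t$ (and $\Z_0 \subseteq \G(t) + \overline{B}_{t\rho}(0)$ for $s = 0$), whence $\e(\G(s),\G(t)) \le (t-s)\rho$; for the lower bound it sandwiches via the triangle inequality $\e(\Z_0,\Z_1) \le \e(\Z_0,\G(s)) + \e(\G(s),\G(t)) + \e(\G(t),\Z_1) \le s\rho + \e(\G(s),\G(t)) + (1-t)\rho$, and the saturation $\e(\Z_0,\Z_1) = \rho$ forces $\e(\G(s),\G(t)) \ge (t-s)\rho$. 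You instead compute everything exactly through the erosion identity $\d(x,\W + \overline{B}_c(0)) = \max\{0,\, \d(x,\W) - c\}$ --- a close relative of \cite[Lemma 3.1]{BouThi02}, which the paper itself invokes later in the proof of Lemma \ref{L:lemma tecnico-2}, though you prove it from scratch for arbitrary nonempty sets --- and then establish the needed saturation $\sup\{\d(x,\Z_1) : x \in \Z_1 + \overline{B}_a(0)\} = a$ by radially rescaling near-maximisers of $\d(\cdot,\Z_1)$ drawn from $\Z_0$. You rightly single that step out as the delicate one: the saturation is false without reference to $\Z_0$ (take $\Z_1 = \H$, so that the tube adds no distance), and your use of $\rho > a$, i.e. of $\e(\Z_0,\Z_1) = \rho$, is precisely the information that the paper's triangle-inequality step extracts without constructing any point. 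What each approach buys: the paper's argument is shorter and works verbatim in any setting where balls add under Minkowski sum; yours is more computational but yields the exact distance function $x \longmapsto \d(x,\G(t))$ on all of $\H$, not merely the excess, and makes explicit where the linear (radial-segment) structure of $\H$ enters.
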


\begin{proof}
Let us set $D_\delta := \overline{B}_\delta(0)$. For every $\delta > 0$ we have 
$\G(0) = \Z_0 \subseteq \Z_1 + D_{\rho} = \Z_1 + D_{(1-t)\rho} + D_{t\rho}$ thus
$\e(\G(0),\G(t)) \le t\rho$. If $0 < s \le t$ we have
\begin{align}
  \G(s) 
    & = \Z_1 + D_{(1-s)\rho} \notag \\
    & = \Z_1 + D_{(1- t)\rho} + D_{(t-s)\rho} \notag \\   
    & = \G(t) + D_{(t-s)\rho}.
\end{align}
Therefore $\e(\G(s),\G(t)) \le (t - s)\rho = (t-s)\e(\Z_0,\Z_1)$. On the other hand we have
\begin{align}
\e(\Z_0,\Z_1) & \le \e(\Z_0,\G(s)) + \e(\G(s),\G(t)) + \e(\G(t),\Z_1) \notag \\
& \le s\e(\Z_0,\Z_1) + \e(\G(s),\G(t)) + (1-t)\e(\Z_0,\Z_1), \notag 
\end{align} hence
$(t-s)\e(\Z_0,\Z_1) \le \e(\G(s),\G(t))$ and \eqref{F geodesic} is proved.
\end{proof}

The next Lemma shows that the trajectory of the solution of the sweeping process driven by $\G_{(\Z_0,\Z_1)}$ is always a straight line segment connecting the initial datum $y_0$ to its projection to $\Z_1$.

\begin{Lem}\label{L:particular sweeping process}
Assume that $r \in \opint{0,\infty}$. Let $\Z_0 \in \Conv_{r}(\H)$ and $\Z_1 \in \Conv_r(\H)$ be such that 
$\rho := \e(\Z_0,\Z_1) < r$, and let 
$\G_{(\Z_0, \Z_1)} = \G : \clint{0,1} \function \Conv_r(\H)$ be defined by \eqref{catching-up geodesic-2}. If 
$y_0 \in \Z_0$, then let $t_0 \in \clint{0,1}$ be the unique number such that  
\begin{equation}\label{t_0 s.t. d(y_0,B) = (1-t_0)delta}
  \norm{y_0 - \P_{\Z_1}(y_0)}{} = (1-t_0)\rho
\end{equation}
($t_0$ is well defined because $\Z_1$ is $r$-prox-regular and $\e(\Z_1,\Z_2) < r$). If $y \in \Lip(\clint{0,1};\H)$ is defined by
\begin{equation}\label{solution for catching-up geodesic-2}
  y(t) :=
  \begin{cases}
    y_0 & \text{if $t \in \clsxint{0,t_0}$} \\
    y_0 + \dfrac{t-t_0}{1-t_0}(\P_{\Z_1}(y_0) - y_0) & \text{if $t_0 \neq 1$, $t \in \clsxint{t_0,1}$} \\
    \P_{\Z_1}(y_0) & \text{if $t = 1$}
  \end{cases},
\end{equation}
then
\begin{alignat}{3} 
  & y(t) \in \G_{(\Z_0, \Z_1)} (t) & \qquad & \forall t \in \clint{0,1}, 
     \label{y in G - Lip} \\
  & y'(t) \in -N_{\G_{(\Z_0, \Z_1)} (t)}(y(t)) & \qquad &  \text{for $\leb^{1}$-a.e. $t \in \clint{0,1}$}, 
      \label{diff. incl. G - Lip} \\
  & y(0) = y_0,
      \label{in. cond. G - Lip}
\end{alignat}
i.e. $y$ is the unique solution of the sweeping process driven by $\G_{(\Z_0,\Z_1)}$ with initial condition 
$y_0 \in \Z_0$. 
\end{Lem}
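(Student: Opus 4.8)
The plan is to verify directly that the explicit function $y$ of \eqref{solution for catching-up geodesic-2} satisfies \eqref{y in G - Lip}--\eqref{in. cond. G - Lip}, and then to obtain the word ``unique'' from the general theory rather than reprove it. Throughout I write $p := \P_{\Z_1}(y_0)$, which is well defined: since $y_0 \in \Z_0$ we have $\d(y_0,\Z_1) \le \e(\Z_0,\Z_1) = \rho < r$, so the $r$-prox-regularity of $\Z_1$ together with Proposition \ref{1proj in seg} forces $\Proj_{\Z_1}(y_0)$ to be a singleton; moreover $\norm{y_0 - p}{} = \d(y_0,\Z_1) \le \rho$, which is precisely what makes $t_0 \in \clint{0,1}$ well defined through \eqref{t_0 s.t. d(y_0,B) = (1-t_0)delta}. (The case $\rho = 0$ is trivial: then $\G$ is constantly $\Z_1$, $y_0 \in \Z_0 \subseteq \overline{\Z_1} = \Z_1$, $p = y_0$, and $y \equiv y_0$ solves the problem; so I treat $\rho > 0$ below.)

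First I would record the geometry of $\G(t)$ for $0 < t < 1$. Writing $\G(t) = \Z_1 + \overline{B}_{(1-t)\rho}(0) = (\Z_1)_{(1-t)\rho}$ with $0 < (1-t)\rho < \rho < r$, Lemma \ref{L:lemma tecnico-2} applies with $\K = \Z_1$ and $\rho$ replaced by $(1-t)\rho$, giving the three facts I shall use: (i) $x \in \G(t)$ iff $\norm{x - \P_{\Z_1}(x)}{} \le (1-t)\rho$; (ii) $x \in \partial\G(t)$ iff $\norm{x - \P_{\Z_1}(x)}{} = (1-t)\rho$; and (iv) $N_{\G(t)}(x) = \{\lambda(x - \P_{\Z_1}(x)) : \lambda \ge 0\}$ whenever $x \in \partial\G(t)$.

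The crux is that the projection onto $\Z_1$ stays constant along the trajectory. For $t \in \clint{t_0,1}$ one rewrites \eqref{solution for catching-up geodesic-2} as $y(t) = p + \frac{1-t}{1-t_0}(y_0 - p)$, so $y(t)$ lies on the open segment joining $p$ to $y_0$ for $t \in \opint{t_0,1}$; since $p \in \Proj_{\Z_1}(y_0)$, Proposition \ref{1proj in seg} yields $\P_{\Z_1}(y(t)) = p$ for all such $t$ (and trivially at $t = t_0$). Hence $\norm{y(t) - \P_{\Z_1}(y(t))}{} = \frac{1-t}{1-t_0}\norm{y_0 - p}{} = (1-t)\rho$, so $y(t) \in \partial\G(t)$ by (ii). For $0 < t \le t_0$ instead $y(t) = y_0$ and $\norm{y_0 - p}{} = (1-t_0)\rho \le (1-t)\rho$, whence $y_0 \in \G(t)$ by (i); combining this with $y(0) = y_0 \in \Z_0 = \G(0)$ and $y(1) = p \in \Z_1 = \G(1)$ proves \eqref{y in G - Lip} and \eqref{in. cond. G - Lip}, and also shows that $y$ is continuous and piecewise affine, hence $y \in \Lip(\clint{0,1};\H)$.

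For the differential inclusion \eqref{diff. incl. G - Lip} I split on the two regimes. On $\opint{0,t_0}$ the function $y$ is constant, so $y'(t) = 0 \in -N_{\G(t)}(y(t))$ since $0$ always lies in a normal cone. On $\opint{t_0,1}$ (nonempty only when $t_0 < 1$, in which case $y_0 \neq p$) one has $y'(t) = \frac{1}{1-t_0}(p - y_0)$, while $y(t) - p = \frac{1-t}{1-t_0}(y_0 - p)$ with $\frac{1-t}{1-t_0} > 0$, so (iv) gives $N_{\G(t)}(y(t)) = \{\lambda(y(t) - p) : \lambda \ge 0\} = \{\lambda(y_0 - p) : \lambda \ge 0\}$; taking $\lambda = 1/(1-t_0) \ge 0$ shows $-y'(t) = \frac{1}{1-t_0}(y_0 - p) \in N_{\G(t)}(y(t))$, establishing \eqref{diff. incl. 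G - Lip} off the finite set $\{0,t_0,1\}$. Uniqueness is not reproved: $y$ is absolutely continuous and each $\G(t)$ is uniformly prox-regular (being $(r-\rho)$-prox-regular by Lemma \ref{L:lemma tecnico-2}(iii)), so the hypomonotonicity-plus-Gronwall argument used for uniqueness in Theorem \ref{Lip theorem} (following \cite[Proposition 3.6]{Thi16}) applies verbatim and singles out $y$. The only delicate point — and the whole reason the straight segment works — is the alignment observed above, namely that $-y'(t)$ points exactly along the ray $\{\lambda(y(t)-p) : \lambda \ge 0\}$ generating $N_{\G(t)}(y(t))$; everything else is bookkeeping.
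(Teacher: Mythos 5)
Your proof is correct and follows essentially the same route as the paper's: a direct verification of the explicit formula \eqref{solution for catching-up geodesic-2}, using Lemma \ref{L:lemma tecnico-2} (parts (i), (ii), (iv)) to identify when $y(t)$ lies in $\G(t)$ or on $\partial\G(t)$ and to read off the normal cone there, with uniqueness delegated to the Lipschitz theory rather than reproved. Your explicit appeal to Proposition \ref{1proj in seg} to justify $\P_{\Z_1}(y(t)) = \P_{\Z_1}(y_0)$ along the segment makes fully precise a step the paper leaves implicit, and your separate treatment of $\rho = 0$ plays the same role as the paper's separate case $y_0 \in \Z_1$ (i.e. $t_0 = 1$).
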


\begin{proof}
For simplicity let us write $\G := \G_{(\Z_0,\Z_1)}$.
If $y_0 \in \Z_1$ we have that $t_0 = 1$, $y(t) = y_0 = \P_{\Z_1}(y_0) \in \G(t)$ and $y'(t) = 0$ for every 
$t \in \clint{0,1}$, and we are done. Therefore we now assume that $y_0 \not\in \Z_1$, i.e. $t_0 < 1$, thus from 
Lemma \ref{L:lemma tecnico-2}-(ii) and formula \eqref{t_0 s.t. d(y_0,B) = (1-t_0)delta} we deduce that
\begin{equation}\label{y_0 in bordo B_(1-t)}
  y_0 \in \partial(\Z_1 + \overline{B}_{(1-t)\rho}) \iff t = t_0,
\end{equation}
(that is $t_0$ is the first time when $\partial(\Z_1 + \overline{B}_{(1-t)\rho})$ meets $y_0$). If $t \in \cldxint{t_0,1}$ we have that 
\[
  \norm{y(t) - \P_{\Z_1}(y_0)}{} = \frac{1-t}{1-t_0} \norm{y_0 - \P_{\Z_1}(y_0)}{} = 
  \frac{1-t}{1-t_0}(1-t_0)\rho = (1-t)\rho,
\]
therefore
\begin{equation}\label{y(t) in bordo(B_((1-t)delta)) for t > t_0}
  y(t) \in \partial(\Z_1 + \overline{B}_{(1-t)\rho}) \qquad \forall t \in \clint{t_0,1}.
\end{equation}
Therefore, since
\[
  y'(t) =
  \begin{cases}
    0 & \text{if $t \in \opint{0,t_0}$} \\
    \dfrac{1}{1-t_0}(\Proj_{\Z_2}(y_0) - y_0) & \text{if $t \in \opint{t_0,1}$}
  \end{cases},
\]
we infer from Lemma \ref{L:lemma tecnico-2}-(iv) that 
$-y'(t) \in N_{\G(t)}(y(t))$ for every $t \in \opint{0,1} \setmeno \{t_0\}$, and we are done. 
\end{proof}

Now we prove the main theorem of the paper. 

\begin{Thm}\label{main thm}
Assume that $r \in \opint{0,\infty}$ and let $\Conv_r(\H)$ be the family of $r$-prox-regular subsets of $\H$. 
Assume that $\C \in \BR^\r_\loc(\clsxint{0,\infty};\Conv_r(\H))$, i.e. that 
$\C : \clsxint{0,\infty} \function \Conv_r(\H)$ has locally bounded retraction. If $y_0 \in \C(0) + B_r(0)$ and 
$\e(\C(t-),\C(t)) < r$ for every $t \ge 0$, then there exists a unique 
$y \in \BV^\r_\loc(\clsxint{0,\infty};\H)$ such that there exists a Borel measure 
$\mu : \borel(\clsxint{0,\infty}) \function \clint{0,\infty}$ and a function $v \in \L^1_\loc(\mu;\H)$ such that
\begin{alignat}{3}
  & y(t) \in \C(t) & \qquad & \forall t \in \clsxint{0,\infty}, \label{constr. arbBV sweep} \\
  & \D y = v \mu, \label{Dy = vmu}\\
  & v(t) \in -N_{\C(t)}(y(t)) & \qquad & \text{for $\mu$-a.e. $t \in \clsxint{0,\infty}$}, \label{diff inclu}\\
  & y(0) = \Proj_{\C(0)}(y_0). \label{i.c. arbBV sweep}
\end{alignat}
Moreover 
\begin{equation}
\V(y,\clint{s,t})  \le  \ret(\C,\clint{s,t}) \qquad \forall t, s \ge 0,\ s < t,
\end{equation}
in particular 
\[
\cont(\C) \subseteq \cont(y).
\]
We will denote this unique solution by $\Sw(\C,y_0)$, so that if 
\[
  \textsl{D}_{\BR} := 
  \{(\C,y_0) \in \BR^\r_\loc(\clsxint{0,\infty};\Conv_r(\H)) \times \H\ :\ \ y_0 \in \C(0) + B_r(0)\}, 
\] 
then we can define $\Sw$ $:$ $\textsl{D}_{\BR} \function \BR_\loc(\clsxint{0,\infty};\H)$, 
the solution operator associating with every pair $(\C,y_0)$ the only $y$ satisfying 
\eqref{constr. arbBV sweep}--\eqref{i.c. arbBV sweep}.
\end{Thm}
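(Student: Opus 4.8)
The plan is to reduce the problem to the $1$-Lipschitz case of Theorem \ref{Lip theorem} by means of the arc-length reparametrization of \cite{Rec20}, transported from the convex--Hausdorff setting to the prox-regular--excess one. Uniqueness is not the new ingredient: under the standing jump condition $\e(\C(t-),\C(t)) < r$ it is exactly the uniqueness statement already established in \cite{Thi16} (where only the hypomonotonicity of the normal cone, i.e.\ Theorem \ref{charact proxreg}-(iii), is used), so I would simply invoke it. The substance is therefore existence, which I would obtain by pulling back the Lipschitz solution along a suitable nondecreasing, right-continuous time change.

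First I introduce the retraction length $\ell_\C(t) := \ret(\C,\clint{0,t})$ of \eqref{ret}, which is nondecreasing, right-continuous (because $\C \in \BR^\r_\loc$), and satisfies $\e(\C(s),\C(t)) \le \ell_\C(t)-\ell_\C(s)$. Setting $L := \sup_{t}\ell_\C(t)$, I build $\Ctilde : \clsxint{0,L} \function \Conv_0(\H)$ obeying the asymmetric $1$-Lipschitz bound \eqref{C 1-e-Lip} as follows: on the range of $\ell_\C$ I let $\Ctilde(\ell_\C(t)) := \C(t)$, the collapsed fibres over the flat parts of $\ell_\C$ (where $\C$ merely expands) being assigned so that $\Ctilde(\ell_\C(t)) \subseteq \C(t)$ holds for every $t$; on each gap $\opint{\ell_\C(t-),\ell_\C(t)}$ opened by a discontinuity I splice in the $\e$-geodesic $\G_{(\C(t-),\C(t))}$ of Definition \ref{D:F}, reparametrized affinely onto the gap. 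By \eqref{F geodesic} each spliced arc has $\e$-speed $\e(\C(t-),\C(t))/(\ell_\C(t)-\ell_\C(t-)) \le 1$, so \eqref{C 1-e-Lip} holds throughout. Crucially $\Ctilde$ stays uniformly prox-regular on compacts: the intermediate sets $\C(t)+\overline{B}_{(1-s)\e(\C(t-),\C(t))}(0)$ are $(r-(1-s)\e(\C(t-),\C(t)))$-prox-regular by Lemma \ref{L:lemma tecnico-2}-(iii), hence $(r-\rho_{\max})$-prox-regular, where $\rho_{\max} := \max_{t} \e(\C(t-),\C(t)) < r$ is the largest jump on the compact interval at hand (a genuine maximum, strictly below $r$, because local bounded retraction forces $\sum_t \e(\C(t-),\C(t)) < \infty$). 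With $y_0' := \P_{\C(0)}(y_0) \in \C(0)$ (well defined since $y_0 \in \C(0)+B_r(0)$ and $\C(0) \in \Conv_r(\H)$), Theorem \ref{Lip theorem}, applied on each compact $\tau$-interval with constant $r-\rho_{\max}$ and patched by uniqueness, yields $\yhat := \Sw(\Ctilde,y_0') \in \Lip_\loc(\clsxint{0,L};\H)$ with $\Lipcost[\yhat] \le 1$ and $\yhat(0) = \P_{\C(0)}(y_0)$; admissibility of $y_0'$ is automatic since $y_0' \in \Ctilde(0)$.

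Next I set $y := \yhat \circ \ell_\C$. As $\yhat$ is Lipschitz and $\ell_\C$ nondecreasing, Proposition \ref{P:BV chain rule} gives $y \in \BV^\r_\loc(\clsxint{0,\infty};\H)$ together with $\D y = v\,\mu$, where $\mu := \D\ell_\C$ is a positive locally finite measure and $v(t) = \yhat'(\ell_\C(t))$ at continuity points of $\ell_\C$, $v(t) = (\yhat(\ell_\C(t))-\yhat(\ell_\C(t-)))/(\ell_\C(t)-\ell_\C(t-))$ at its jumps; since $\Lipcost[\yhat] \le 1$ we have $v \in \L^1_\loc(\mu;\H)$. The constraint \eqref{constr. arbBV sweep} follows from $y(t) = \yhat(\ell_\C(t)) \in \Ctilde(\ell_\C(t)) \subseteq \C(t)$ for every $t$, and the initial condition \eqref{i.c. arbBV sweep} from $y(0) = \yhat(0) = \P_{\C(0)}(y_0)$. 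For the inclusion \eqref{diff inclu} I distinguish the $\mu$-relevant points: the flat parts of $\ell_\C$ carry no $\mu$-mass and may be ignored; at continuity points $v(t) = \yhat'(\ell_\C(t)) \in -N_{\Ctilde(\ell_\C(t))}(\yhat(\ell_\C(t))) = -N_{\C(t)}(y(t))$, and Proposition \ref{P:BV chain rule}-(i) transfers this $\leb^1$-a.e.\ statement in $\tau$ into a $\mu$-a.e.\ statement in $t$; at a jump time $t$ the restriction of $\yhat$ to $\clint{\ell_\C(t-),\ell_\C(t)}$ solves the sweeping process driven by $\G_{(\C(t-),\C(t))}$, so by Lemma \ref{L:particular sweeping process} it is the straight segment terminating at $\P_{\C(t)}(\yhat(\ell_\C(t-)))$; hence $\yhat(\ell_\C(t)) = \P_{\C(t)}(y(t-))$ and $v(t)$ is a nonnegative multiple of $\P_{\C(t)}(y(t-)) - y(t-) \in -N_{\C(t)}(y(t))$. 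Finally the $1$-Lipschitz bound on $\yhat$ gives $\V(y,\clint{s,t}) \le \V(\yhat,\clint{\ell_\C(s),\ell_\C(t)}) \le \ell_\C(t)-\ell_\C(s) = \ret(\C,\clint{s,t})$, and since both $\ell_\C$ and $\yhat$ are continuous at points of $\cont(\C)$, so is $y$, whence $\cont(\C) \subseteq \cont(y)$.

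The main obstacle is the second paragraph: verifying that splicing the geodesics into the continuous skeleton of $\C$ truly produces a single-valued $\Ctilde$ satisfying the asymmetric bound \eqref{C 1-e-Lip} across all of $\clsxint{0,L}$ — uniformly through the accumulation points of the countably many jumps and through the collapsed expanding fibres — and that the left-limit sets $\C(t-)$ actually belong to $\Conv_r(\H)$, so that Definition \ref{D:F}, the geodesic identity \eqref{F geodesic}, and Lemma \ref{L:particular sweeping process} are legitimately applicable at every jump. This is precisely where the reparametrization machinery of \cite{Rec20}, conceived for the Hausdorff metric on convex sets, must be rebuilt for the excess on prox-regular sets, and where the hypotheses $\e(\C(t-),\C(t)) < r$ and local bounded retraction are indispensable to keep $r-\rho_{\max}$ positive and the splicing well defined.
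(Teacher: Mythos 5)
Your overall route is exactly the paper's: reparametrize by the retraction length $\ell_\C$, fill each jump gap $\clsxint{\ell_\C(t-),\ell_\C(t)}$ with the $\e$-geodesic of Definition \ref{D:F}, solve the resulting $1$-Lipschitz process by Theorem \ref{Lip theorem}, pull back via $y=\yhat\circ\ell_\C$ and Proposition \ref{P:BV chain rule}, and identify the jump behaviour through Lemma \ref{L:particular sweeping process}. But the obstacle you flag at the end --- whether $\C(t-)\in\Conv_r(\H)$, so that Theorem \ref{Lip theorem} and Lemma \ref{L:particular sweeping process} are applicable at the left endpoint of each gap --- is a genuine gap in your construction, not merely a technicality to be ``rebuilt'': the left limit $\C(t-)=\{x:\lim_{s\to t-}\d(x,\C(s))=0\}$ is only known to be closed, since one-sided convergence in excess does not transmit prox-regularity, and your spliced curve takes this possibly non-prox-regular value at $\sigma=\ell_\C(t-)$, violating the hypothesis of Theorem \ref{Lip theorem} (which needs $\Ctilde(\sigma)$ uniformly prox-regular at \emph{every} $\sigma$, as the catching-up grid can hit these countably many bad points) and the hypothesis $\Z_0\in\Conv_r(\H)$ of Lemma \ref{L:particular sweeping process}. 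The paper closes precisely this hole by never using $\C(t-)$ as the geodesic's left endpoint: it sets $\A(t):=\C(t)+\overline{B}_{\rho_t}(0)$, which contains $\C(t-)$ and is $(r-\rho_t)$-prox-regular by Lemma \ref{L:lemma tecnico-2}-(iii), and splices in $\G_t:=\G_{(\A(t),\,\cdot\,)}$ instead. Since $\G_t(\tau)=\C(t)+\overline{B}_{(1-\tau)\rho_t}(0)$ for $\tau>0$, this coincides with your curve except at the single left endpoint of each gap; the $1$-Lipschitz bound survives the splice because $\e(\C(s),\A(t))\le\e(\C(s),\C(t-))+\e(\C(t-),\A(t))=\e(\C(s),\C(t-))$, and Lemma \ref{L:particular sweeping process} then applies with $\Z_0=\A(t)$ and initial datum $y(t-)\in\C(t-)\subseteq\A(t)$, producing the same segment ending at $\P_{\C(t)}(y(t-))$. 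This one-point modification is the missing idea, and it is why the paper's Lemma \ref{L:lemma tecnico-2} is stated in the first place.

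Two secondary points. First, your uniqueness step is looser than the paper's: \cite{Thi16} treats bounded (truncated) retraction, and the paper does not invoke it wholesale but localizes, splitting $\discont(\C)$ into the jumps of size $<r/2$ and the locally finite family of jumps in $\clsxint{r/2,r}$, applying \cite[Proposition 3.6]{Thi16} iteratively between consecutive large jumps and gluing the pieces (checking the inclusion at the glue times separately); your one-line appeal would need this reduction spelled out in the present locally-bounded, large-jump setting. Second, to your credit, applying Theorem \ref{Lip theorem} on compact $\tau$-intervals with the uniform constant $r-\rho_{\max}$ and patching by uniqueness is a sound way to handle the fact that $\sup_t\rho_t$ need not stay away from $r$ on all of $\clsxint{0,\infty}$, a uniformity point the paper passes over silently when it declares $\Ctilde$ to be $\Conv_r(\H)$-valued; the rest of your verification of \eqref{constr. arbBV sweep}--\eqref{i.c. arbBV sweep}, the variation bound, and $\cont(\C)\subseteq\cont(y)$ matches the paper's argument.
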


\begin{proof}
We recall that 
$\ell_\C : \clsxint{0,\infty} \function \clsxint{0,\infty}$ is defined by
\begin{equation}\label{ellC}
  \ell_\C(t) := \ret(\C;\clint{0,t}), \qquad t \ge 0,
\end{equation}
and observe that $\ell_\C$ is right-continuous because of the right-continuity of $\C$. The function $\ell_\C$ is increasing, therefore $\ell_\C^{-1}(\tau)$ is always a (possibly degenerate) interval for every 
$\tau \in \ell_\C(\clsxint{0,\infty})$ and we can define 
$\Ctilde : \ell_\C(\clsxint{0,\infty}) \function \Conv_r(\H)$  
in the following way: 
\[
  \Ctilde(\tau) :=
    \begin{cases}
      \C(t)& \text{if $\ell_\C^{-1}(\tau)$ is a singleton and $\ell_\C^{-1}(\tau) = \{t\}$}, \\
      \C(t) & \text{if $\ell_\C^{-1}(\tau)$ is not a singleton and $\min\ell_\C^{-1}(\tau) = t$}.
    \end{cases}
\]
In the occurrence that $\ell_\C$ is not constant on any nondegenerate interval, then one has 
$\C(t) = \Ctilde(\ell_\C(t))$ for every $t \ge 0$ (this fact suggests that $\Ctilde$ can be considered as a sort of reparametrization of $\C$ by the ``arc length'' $\ell_\C$). Let us also observe that $\C$ is set-theoretically increasing in time on the intervals where $\ell_\C$ is constant (therefore the solution of the sweeping process driven by $\C$ is expected to be constant on these intervals). If $\sigma, \tau \in \ell_\C(\clsxint{0,\infty})$ with 
$\sigma < \tau$, then $s = \min\ell_\C^{-1}(\sigma) \le \sup\ell_\C^{-1}(\sigma) = s^*$ and 
$t = \min\ell_\C^{-1}(\tau) \le \sup\ell_\C^{-1}(\tau) = t^*$ for some $s, s^*, t, t^* \ge 0$, thus
$\e(\Ctilde(\sigma),\Ctilde(\tau))$ $= \e(\C(s),\C(t)) \le \ret(\C;\clint{s,t}) = \tau - \sigma$, and we have proved that 
$\Ctilde$ is $1$-Lipschitz continuous with respect to $\e$, i.e.
\[
  \e(\Ctilde(\sigma),\Ctilde(\tau)) \le \tau - \sigma \qquad \forall \tau, \sigma \in \ell_\C(\clsxint{0,\infty}),\ \tau \le \sigma. 
\] 
In order to extend the definition of $\Ctilde$ over the whole $\clsxint{0,\infty}$ we first set
\begin{equation}
  \rho_t := \e(\C(t-),\C(t)), \qquad t \in \discont(\C),
\end{equation}
and
\begin{equation}
  \A(t) := \C(t) + \overline{B}_{\rho_t}(0), \qquad t \in \discont(\C).
\end{equation}
By assumption we have that $0 < \rho_t < r$, moreover by virtue of Lemma \ref{L:lemma tecnico-2}-(iii) we have $\A(t)$ is a $(r - \rho_t)$-prox-regular set. Let us also observe that
\begin{equation}
  \C(t-) \subseteq \A(t)  \qquad \forall t \in \discont(\C).
\end{equation}
Now for any $t \in \discont(\C)$ we can define the curve $\G_t : \clint{0,1} \function \Conv_r(\H)$ by setting
\[
  \G_t := \G_{(\A(t),\C(t+)+\overline{B}_{\rho_t}(0))} : \clint{0,1} \function \Conv_r(\H), \qquad t \in \discont(\C),
\]
where $\G_{(\A(t),\C(t+)+\overline{B}_{\rho_t}(0))}$ is defined in Definition \ref{D:F}, so that
\[
  \G_t(\tau) = \C(\tau) + \overline{B}_{(1-\tau)\rho_t}(0) \qquad \forall \tau \in \clint{0,1}.
\]
We are now in position to define 
\begin{equation}\label{Ctilde}
  \Ctilde(\sigma) := \G_{t}\left(\frac{\sigma - \ell_\C(t-)}{\ell_\C(t) - \ell_\C(t-)}\right)  \qquad
       \text{if $\sigma \in \clsxint{\ell_\C(t-), \ell_\C(t)}$, if $\ell_\C(t-) \neq \ell_\C(t)$}.
\end{equation}
The resulting curve $\Ctilde : \clsxint{0,\infty} \function \Conv_r(\H)$ is a $1$-Lipschitz continuous function 
with respect to $\e$ because $\G_t$ is a geodesic connecting respectively $\A(t)$ to $\C(t)$, 
$\C(t-) \subseteq \A(t)$, and $\Ctilde$ is $1$-Lipschitz continuous on $\ell_\C(\clsxint{0,\infty})$. Therefore there exists a unique function $\yhat \in \Lip(\clsxint{0,\infty};\H)$ 
such that
\begin{alignat}{3}
  & \yhat(\sigma) \in \Ctilde(\sigma) & \quad & \forall \sigma \in \clsxint{0,\infty}, \label{yinC-hat} \\
  & \yhat'(\sigma) \in -N_{\Ctilde(\sigma)}(\yhat(\sigma)) & \quad & 
       \text{for $\leb^{1}$-a.e. $\sigma \in \clsxint{0,\infty}$}, 
     \label{diffincl-hat} \\
  & \yhat(0) = \P_{\Ctilde(0)}(y_{0}) = \P_{\C(0)}(y_{0}). &  \label{incond-hay}  
\end{alignat}
Now let us set
\begin{equation}\label{def-y}
   y := \Sw(\Ctilde, y_0) \circ \ell_\C = \yhat \circ \ell_{\C},
\end{equation}
and let us prove that $y$ solves \eqref{constr. arbBV sweep}-\eqref{i.c. arbBV sweep}. It is obvious that 
$y(t) \in \C(t)$ when $\ell_\C^{-1}(\ell_{\C}(t)) = \{t\}$. If instead $\ell_\C^{-1}(\ell_{\C}(t))$ is not a singleton, we have that $y(t) = \yhat(\ell_\C(t)) \in \Ctilde(\ell_\C(t)) = \C(\min\ell_\C^{-1}(\ell_\C(t))) \subseteq \C(t)$, because $\C$ is set-theoretically increasing on $\ell_\C^{-1}(\ell_{\C}(t))$. Thus condition \eqref{constr. arbBV sweep} is satisfied. The initial condition \eqref{i.c. arbBV sweep} is obviously satisfied. Since $\yhat$ is Lipschitz continuous and 
$\ell_\C$ is right-continuous and increasing, it is clear that $y \in \BV^\r_\loc(\clsxint{0,\infty};\H)$, so that 
$\discont(y) \subseteq \discont(\ell_\C) = \discont(\C)$. 
If $v : \clsxint{0,\infty} \function \H$ is defined by
\begin{equation}\label{density w}
  v(t) := 
  \begin{cases}
  \yhat' (\ell_{\C}(t)) & \text{if $t \in \cont(\ell_\C)$} \\
  \ \\
  \dfrac{\yhat(\ell_\C(t)) - \yhat(\ell_\C(t-))}{\ell_\C(t) - \ell_\C(t-)} & \text{if $t \in \discont(\ell_\C)$}
  \end{cases},
\end{equation}
then from Proposition \ref{P:BV chain rule}-(ii) we infer that $\D y = v \D\ell_\C$, i.e. \eqref{Dy = vmu} holds with 
$\mu = \D \ell_\C$. Let us set
\begin{equation}\label{Z}
  Z := \{t \in \clsxint{0,\infty}\ :\ -\yhat'(t) \not\in N_{\Ctilde(t)}(\yhat(t))\}.
\end{equation} 
From formula \eqref{diffincl-hat} we deduce that
\[
  \leb^{1}(Z) = 0,
\]
therefore, thanks to Proposition \ref{P:BV chain rule}-(i), we have that
\begin{align}
  & \D\ell_{\C}(\{t \in \cont(\ell_{\C})\ :\ -v(t) \not\in N_{\C(t)}(y(t))\}) \notag \\
  = & \D\ell_{\C}
         (\{t \in \cont(\ell_\C)\ :\ -\yhat'(\ell_{\C}(t)) \not\in N_{\Ctilde(\ell_{\C}(t))}(\yhat(\ell_{\C}(t))\})  
          \notag \\ 
  = & \D\ell_{\C}(\{t \in \cont(\ell_{\C})\ :\ \ell_{\C}(t) \in Z\}) = \leb^{1}(Z) = 0. \label{Dl_C(...)}
\end{align}
Now let us fix $t \in \discont(\ell_\C)$ and observe that 
\[
  \Ctilde(\sigma) = \G_{(\A(t),\C(t))}\left(\frac{\sigma-\ell_\C(t-)}{\ell_\C(t) - \ell_\C(t-)}\right) \qquad 
  \forall \sigma \in \opint{\ell_\C(t-), \ell_\C(t)},
\] 
thus by the semigroup property of \eqref{yinC-hat}--\eqref{incond-hay}, by Lemma 
\ref{L:particular sweeping process}, and thanks to the fact that 
\begin{equation}
  y(t-) = \yhat(\ell_\C(t-)) \in \Ctilde(\ell_\C(t-)) =  \A(t),
\end{equation} 
we have
\begin{align}
  -v(t) =
  \frac{\yhat(\ell_\C(t-)) - \yhat(\ell_\C(t))}{\ell_\C(t) - \ell_\C(t-)} =
  \frac{\yhat(\ell_\C(t-)) - \P_{\C(t)}(\yhat(\ell_\C(t-)))}{\ell_\C(t) - \ell_\C(t-)},
\end{align}
hence
\[
  -v(t) \in N_{\C(t)}(\P_{\C(t)}(\yhat(\ell_\C(t-)))) = N_{\C(t)}(\yhat(\ell_\C(t))) = N_{\C(t)}(y(t)) \qquad \forall t \in \discont(\C),
\]
that together with \eqref{Z}-\eqref{Dl_C(...)} implies that 
\[
  \D\ell_{\C}(\{t \in \clsxint{0,\infty}\ :\ -v(t) \not\in N_{\C(t)}(y(t))\}) = 0,
\]
hence \eqref{diff inclu} also holds with $\mu = \D \ell_\C$.

Concerning uniqueness, let us observe that since $\C$ has local bounded retraction, then 
$\discont(\C) = \discont(\ell_\C)$, thus $\discont(\C)$ is at most denumerable. Let us consider first the case when  the cardinality of $\discont(\C)$ is equal to the cardinality of $\en$. Since 
$\e(\C(t-),\C(t)) = \ell_\C(t) - \ell_\C(t-)$ for every $t \in \discont(\C)$, we infer that there exist $A_ -$ and $A_+$ such that $\discont(\C) = A_- \cup A_+$, $A_- \cap A_+ = \void$, $A_- = \{t_n\ :\ n \in \en\}$ with 
$\e(\C(t_n-),\C(t_n)) < r/2$ for all $n \in \en$, and $A_+= \{s_m\ :\ m=1,\ldots,N\}$ with 
$r/2 \le \e(\C(s_m-),\C(s_m)) < r$ for all $m = 1, \ldots, N$ and $s_0 := 0 < s_1 < s_2 < \cdots < s_N$. Arguing iteratively, thanks to the existence part of this proof, and to \cite[Proposition 3.6]{Thi16}, we have that for every 
$m \in \{1, \ldots, N\}$ there is only one solution $y_m \in \BV^\r(\clsxint{s_{m-1},s_m};\H)$ to the sweeping process 
\begin{alignat}{3}
  & y_m(t) \in \C(t) & \qquad & \forall t \in \clsxint{s_{m-1},s_m}, \label{constr. arbBV sweep-sm} \\
  & \D y_m = v_m \mu_m, \label{Dy = vmu-sm}\\
  & v_m(t) \in -N_{\C(t)}(y_m(t)) & \qquad & \text{for $\mu$-a.e. $t \in \clsxint{s_{m-1},s_m}$}, \label{diff inclu-sm}\\
  & y_m(s_{m-1}) = \P_{\C(s_{m-1})}(y_{m-1}(s_{m-1}-)), \label{i.c. arbBV sweep-sm}
\end{alignat}
because $y_{m-1}(s_{m-1}-) \in \A(s_{m-1})$. 
Moreover, thanks to the existence part of this proof, partitioning 
$\clsxint{s_{N},\infty}$ into a denumerable union of precompact intervals, and using \cite[Proposition 3.6]{Thi16}, we infer that there is only one solution $y_{N+1} \in \BV^\r_\loc(\clsxint{s_{N},\infty};\H)$ to the sweeping process 
\begin{alignat}{3}
  & y_{N+1}(t) \in \C(t) & \qquad & \forall t \in \clsxint{s_{N},\infty}, \label{constr. arbBV sweep-sN} \\
  & \D y_{N+1} = v_{N+1} \mu_{N+1}, \label{Dy = vmu-sN}\\
  & v_{N+1}(t) \in -N_{\C(t)}(y_{N+1}(t)) & \qquad & \text{for $\mu$-a.e. $t \in \clsxint{s_{N},\infty}$}, 
  \label{diff inclu-sN}\\
  & y_{N+1}(s_{N}) = \P_{\C(s_{N})} (y_{N}(s_{N}-)), \label{i.c. arbBV sweep-sN}
\end{alignat}
because $y_{N}(s_{N}-) \in \A(s_N)$. We claim that the function $y$ defined piecing together all the $y_m$'s, i.e.
\[
  y(t) := 
  \begin{cases}
  y_m(t) &  \text{if $t \in \clsxint{s_{m-1}, s_m}$, $m \in \{1,\ldots,N\}$}, \\
  y_{N+1}(t) &  \text{if $t \in \clsxint{s_{N}, \infty}$},
  \end{cases}
\] 
is the only solution to \eqref{constr. arbBV sweep}--\eqref{i.c. arbBV sweep}. Indeed setting 
\[
  \mu(B) := \mu_{N+1}(B \cap \clsxint{s_{N},\infty}) + \sum_{m=1}^{N+1}\mu_{m}(B \cap \clsxint{s_{m-1},s_m}),
  \qquad B \in \borel(\clsxint{0,\infty}), 
\]
and 
\[
  v(t) := 
  \begin{cases}
  v_m(t) &  \text{if $t \in \clsxint{s_{m-1}, s_m}$, $m \in \{1,\ldots,N\}$}, \\
  v_{N+1}(t) &  \text{if $t \in \clsxint{s_{N}, \infty}$},
  \end{cases}
  \qquad t \in \clsxint{0,\infty},
\]
then $\D y = w \mu$, and for every $m \in \{1,\ldots,N\}$, by \eqref{diff inclu}, we have
\[
  y(s_m) - y(s_m-) = \D y(\{s_m\}) = v_{m+1}(s_m)\mu_{m+1}(\{s_m\})
\]
and
\[
  y(s_m) - y(s_m-) = \P_{\C(s_{m})}(y_{m}(s_{m}-)) - y(s_m-) \in -N_{\C(s_m)}(y_{m}(s_m))
\]
so that, since $\mu_{m+1}(\{s_m\}) > 0$,
\begin{equation}\label{a f cond}
  v(s_m) \in -N_{\C(s_m)}(y(s_m)) \qquad \forall m \in \{1,\ldots,N\},
\end{equation}
and we know that on the jumps $t$ of $\C$ any solution $y$ satisfies a fortiori the condition \eqref{a f cond}, because its density with respect to a positive measure must belong to the $-N_{\C(t)}(y(t))$. The case when the cardinality of $\discont(\C)$ is finite follows the same argument and is actually simpler.

Finally from the definition of pointwise variation, from \eqref{def-y}, and from the $1$-Lipschitz continuity of $\yhat$, we infer that
\begin{align}
  \V(y,\clint{s,t}) = \V(\yhat, \clint{\ell_{\C}(s),\ell_\C(t)}) \le \V(\ell_\C, \clint{s,t}) = \ret(\C,\clint{s,t}),
\end{align}
whenever $s < t$, and the theorem is completely proved.
\end{proof}

\begin{Cor}
Assume that $r \in \opint{0,\infty}$, $\C \in \BR^\r_\loc(\clsxint{0,\infty};\Conv_r(\H))$, $y_0 \in \C(0) + B_r(0)$, and let 
$y := \Sw(\C,y_0)$ be the unique solution of the sweeping process 
\eqref{constr. arbBV sweep}--\eqref{i.c. arbBV sweep}.
\begin{itemize}
\item[(i)]
If $\C \in \Czero\BR_\loc(\clsxint{0,\infty};\Conv_r(\H))$ then 
$\Sw(\C,y_0) \in \Czero\BV_\loc(\clsxint{0,\infty};\H)$. 
\item[(ii)]
If $\C$ is locally absolutely continuous, in the sense that there exists a locally absolutely continuous function 
$\alpha : \clsxint{0,\infty} \function \mathbb{R}$ such that
\[
  \e(\C(t),\C(s)) \le \int_t^s \alpha'(\tau) \de \tau \qquad \forall t, s \ge 0, \ t < s,
\]
then
$y$ is locally absolutely continuous, i.e. $y \in W^{1,1}(\clint{0,T};\H)$ for every $T > 0$. In this case we have that \eqref{constr. arbBV sweep} and \eqref{i.c. arbBV sweep} hold together with 
\begin{equation}\label{fine}
  y'(t) \in -N_{\C(t)}(y(t)) \qquad \text{for $\leb^1$-a.e. $t \in \clsxint{0,\infty}$}.
\end{equation}
\item[(iii)]
If $\C$ is Lipschitz continuous in the sense that there exists $L \ge 0$ such that
\[
  \e(\C(t),\C(s)) \le L(s-t) \qquad \forall t, s \ge 0, \ t < s,
\]
then $y \in \Lip(\clsxint{0,\infty};\H)$ with $\Lipcost[y] \le L$. 
\end{itemize}
\end{Cor}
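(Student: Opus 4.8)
The plan is to deduce all three statements from the single structural fact, established in the proof of Theorem~\ref{main thm}, that the solution factors as $y = \Sw(\C,y_0) = \yhat \circ \ell_\C$, where $\yhat = \Sw(\Ctilde,y_0)$ is $1$-Lipschitz and $\ell_\C(t) = \ret(\C;\clint{0,t})$. The common mechanism is a regularity-transfer chain from $\C$ to $\ell_\C$ and then to $y$: the excess-regularity assumed on $\C$ is turned into the corresponding scalar regularity of the increasing function $\ell_\C$, which is in turn inherited by $y$ through composition with the $1$-Lipschitz map $\yhat$.

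First I would dispose of parts (i) and (iii), which are essentially immediate. For (i), membership $\C \in \Czero\BR_\loc(\clsxint{0,\infty};\Conv_r(\H))$ means $\cont(\C) = \clsxint{0,\infty}$, so the inclusion $\cont(\C) \subseteq \cont(y)$ recorded in Theorem~\ref{main thm} gives continuity of $y$, while the estimate $\V(y,\clint{0,T}) \le \ret(\C,\clint{0,T}) < \infty$ from the same theorem gives $y \in \BV_\loc$; together these yield $y \in \Czero\BV_\loc(\clsxint{0,\infty};\H)$. For (iii), the hypothesis $\e(\C(t),\C(s)) \le L(s-t)$ forces $\ret(\C,\clint{s,t}) \le L(t-s)$ directly from the definition \eqref{retraction}, whence $\norm{y(t)-y(s)}{} \le \V(y,\clint{s,t}) \le \ret(\C,\clint{s,t}) \le L(t-s)$, i.e. $\Lipcost[y] \le L$.

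The substance is in part (ii). The first step is to check that the absolute-continuity assumption propagates to $\ell_\C$: telescoping the pointwise bound $\e(\C(t_{j-1}),\C(t_j)) \le \int_{t_{j-1}}^{t_j}\alpha'$ over a subdivision gives $\ret(\C,\clint{s,t}) \le \alpha(t)-\alpha(s)$, so the increasing function $\ell_\C$ has increments dominated by those of the locally absolutely continuous $\alpha$ and is therefore itself locally absolutely continuous. In particular $\ell_\C$ is continuous, so $\discont(\C) = \discont(\ell_\C) = \void$ and $\mu = \D\ell_\C = \ell_\C'\leb^1$; moreover, since $\yhat$ is Lipschitz, the composition $y = \yhat\circ\ell_\C$ is locally absolutely continuous, giving $y \in \Sob^{1,1}(\clint{0,T};\H)$ for every $T>0$ together with $\D y = y'\leb^1$. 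Comparing this with $\D y = v\mu$ then yields $y'(t) = \ell_\C'(t)v(t)$ for $\leb^1$-a.e.\ $t$.

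I expect the only genuinely delicate point to be upgrading the differential inclusion \eqref{diff inclu}, which holds merely $\mu$-a.e., to the $\leb^1$-a.e.\ inclusion \eqref{fine}. The plan is to exploit the conical structure of $N_{\C(t)}(y(t))$. Let $E$ be the $\mu$-null set outside of which \eqref{diff inclu} holds. On $\clsxint{0,\infty}\setmeno E$ one has $v(t) \in -N_{\C(t)}(y(t))$ and $\ell_\C'(t)\ge 0$, so $y'(t) = \ell_\C'(t)v(t) \in -N_{\C(t)}(y(t))$; on $E$, the relation $\int_E \ell_\C'\,\de\leb^1 = \mu(E) = 0$ forces $\ell_\C' = 0$ $\leb^1$-a.e., whence $y'(t) = 0 \in -N_{\C(t)}(y(t))$. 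This treatment of the degeneracy set $\{\ell_\C' = 0\}$, where the reparametrization collapses, is the one step that needs genuine care; everything else is the routine transfer of regularity along the composition.
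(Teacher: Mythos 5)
Your proposal is correct, and for parts (i) and (iii) it coincides in substance with the paper's proof (the paper phrases (i) via the continuity of $\ell_\C$ and (iii) via the equivalence between the $L$-Lipschitz continuity of $\C$ and that of $\ell_\C$, but this is the same regularity transfer along $y = \yhat \circ \ell_\C$ that you use). The genuine divergence is in the crux of part (ii), the upgrade of the $\mu$-a.e.\ inclusion to the $\leb^1$-a.e.\ inclusion \eqref{fine}. The paper works at the level of the reparametrized solution: it introduces the $\leb^1$-null set $Z$ of times where $-\yhat'(\sigma) \notin N_{\Ctilde(\sigma)}(\yhat(\sigma))$, writes $y'(t) = \ell_\C'(t)\,\yhat'(\ell_\C(t))$, uses the cone property to reduce the bad set for $y$ to $\{t : \ell_\C'(t) \neq 0,\ \ell_\C(t) \in Z\}$, and then invokes the external change-of-variables fact \cite[Lemma 3.1]{Rec11b} to conclude that this set is $\leb^1$-null. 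You instead never return to $\yhat$ or $Z$: you use the already established $\mu$-a.e.\ inclusion \eqref{diff inclu} for the density $v$ of the constructed solution, together with the identity $\mu = \D\ell_\C = \ell_\C'\leb^1$ (valid once $\ell_\C$ is shown locally absolutely continuous) and the comparison $y' = \ell_\C' v$ $\leb^1$-a.e.; the degenerate set is then handled by the elementary observation that $\mu(E)=0$ and $\ell_\C' \ge 0$ force $\ell_\C' = 0$ $\leb^1$-a.e.\ on $E$, where $y' = 0 \in -N_{\C(t)}(y(t))$. Both arguments hinge on the cone property of the proximal normal cone, but yours is more self-contained (it replaces the cited lemma by a one-line density argument), at the modest cost of fixing everywhere-defined representatives of $v$ and $\ell_\C'$ and a Borel $\mu$-null $E$ so that the pointwise identity $y'(t) = \ell_\C'(t)v(t)$ and the splitting into $E$ and its complement make sense; also note that since uniqueness in Theorem \ref{main thm} concerns $y$ alone and not the pair $(\mu,v)$, your use of the specific pair $(\D\ell_\C, v)$ from the existence construction is legitimate and should be said explicitly. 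Your telescoping proof that $\ell_\C(t)-\ell_\C(s) \le \alpha(t)-\alpha(s)$, replacing the paper's citation of \cite[Section 3d]{Mor74b}, is likewise correct.
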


\begin{proof}
Recalling \eqref{ellC} and \eqref{Ctilde}, we have 
\[
  \Sw(\C, y_0) = \Sw(\Ctilde, y_0) \circ \ell_\C,
\]
with $\Lipcost[\Sw(\Ctilde, y_0)] \le 1$. Therefore statement (i) follows from the continuity of $\ell_\C$.
Statement (iii) and the first part of statement (ii) follow from the fact that the local absolute continuity of $\C$ (respectively the $L$-Lipschitz continuity of $\C$) is equivalent to the local absolute continuity of $\ell_\C$ (respectively the $L$-Lipschitz continuity of $\C$), see, e.g., \cite[Section 3d]{Mor74b}. We are left to prove \eqref{fine}. We have $\D y = y' \leb^1$, and if we set $\yhat := \Sw(\Ctilde) \in \Lip(\clint{0,T};\H)$ and
\begin{equation}\label{finefine}
  Z := \{s \in \clsxint{0,\infty}\ :\ -\yhat'(s) \not\in N_{\Ctilde(s)}(\yhat(s))\},
\end{equation}
then by Theorem \ref{Lip theorem}
\begin{equation}\label{Z has null measure - bis}
  \leb^{1}(Z) = 0.
\end{equation}
We have $y'(t) = 
  \ell_{\C}'(t) \yhat'(\ell_{\C}(t))$ for $\leb^{1}$-a.e. $t \ge 0$, therefore
\begin{align}\label{null set for y}
  & \sp \leb^{1}(\{t \in \opint{0,T}\ :\ -y'(t) \not\in N_{\C(t)}(y(t))\}) \notag \\
  & =    \leb^{1}(\{t \in \opint{0,T}\ :\ -\ell_{\C}'(t)\yhat'(\ell_{\C}(t)) 
              \not\in N_{\Ctilde(\ell_{\C}(t))}(\yhat(\ell_{\C}(t)))\}).
\end{align}  
Using  the fact that $N_{\mathcal{K}}(x)$ is a cone, we have
\begin{align}\label{null set for y - 2}
   & \sp \{t \in \opint{0,T}\ :\ -\ell_{\C}'(t)\yhat'(\ell_{\C}(t)) 
              \not\in N_{\Ctilde(\ell_{\C}(t))}(\yhat(\ell_{\C}(t)))\} \notag \\
   & =    \{t \in \opint{0,T}\ :\ -\yhat'(\ell_{\C}(t)) 
              \not\in N_{\Ctilde(\ell_{\C}(t))}(\yhat(\ell_{\C}(t))),\ \ell_{\C}'(t) \neq 0\} \notag \\
   & = \{t \in \opint{0,T}\ :\ \ell_{\C}(t) \in Z,\ \ell_{\C}'(t) \neq 0\}.
\end{align}
Moreover observe that (cf. \cite[Lemma 3.1]{Rec11b})
\begin{equation}\label{fine2}
  \leb^1(\{t \ge 0\ :\ \exists \ell'_\C(t) \neq 0,\ \ell_\C(t) \in Z\}) = 0,
\end{equation}
therefore we conclude by collecting equations 
\eqref{finefine}, \eqref{Z has null measure - bis}, \eqref{null set for y - 2} and \eqref{fine2}.
\end{proof}

\begin{Rem}
Part (iii) of the previous Corollary can be proved directly following the proof of Theorem \ref{Lip theorem}.
\end{Rem}


\section*{Acknowledgment}

I am very grateful to the referee for her/his remarks and corrections, and for pointing out that many parts of Lemma \ref{L:lemma tecnico-2} were already proved in \cite{Thi23b}.



\end{document}